\newtheorem{thm}{Theorem}[section]
\newtheorem*{thm*}{Theorem}
\newtheorem{lem}[thm]{Lemma}
\newtheorem*{lem*}{Lemma}
\newtheorem*{prop*}{Proposition}
\theoremstyle{definition}
\newtheorem{defn}[thm]{Definition}
\title{Stick number of non-paneled knotless spatial graphs}
\author{ Erica Flapan}
\address{Department of Mathematics,
610 N. College Ave.,
Pomona College,
Claremont, CA USA}
\email{elf04747@pomona.edu}
\author{Kenji Kozai}
\address{Department of Mathematics,
Rose-Hulman Institute of Technology,
5500 Wabash Ave.,
Terre Haute, IN USA}
\email{kozai@rose-hulman.edu}
\author{Ryo Nikkuni}
\address{Department of Mathematics, 
Tokyo Woman's Christian University, 
2-6-1 Zempukuji, Suginami-ku, Tokyo 167-8585, Japan}
\email{nick@lab.twcu.ac.jp}
 \subjclass{57M15, 57K10, 05C10, 92C40, 92E10}
    \keywords{Non-paneled knotless spatial graphs, stick embeddings of graphs, metalloproteins, m\"{o}bius ladders, ravels, $K_{3,3}$, $K_4$, $K_5$}
    \thanks{The first author was supported in part by NSF Grant DMS-1607744. The third author was supported by JSPS KAKENHI Grant Number JP15K04881.}
\begin{document}
\begin{abstract} We show that the minimum number of sticks required to construct a non-paneled knotless embedding of $K_4$ is 9 and of $K_5$ is 12 or 13.  We use our results about $K_4$ to show that the probability that a random linear embedding of $K_{3,3}$ in a cube is in the form of a M\"{o}bius ladder is $0.97380\pm 0.00003$, and offer this as a possible explanation for why $K_{3,3}$ subgraphs of metalloproteins occur primarily in this form.
\end{abstract}
\maketitle

\section{Introduction}

Chemists introduced the term {\it ravel} in 2008 to describe a hypothetical molecular structure whose topological complexity is the result of ``an entanglement of edges around a vertex that contains no knots or links" \cite{Ravels}.  The first such molecule was synthesized by Feng Li et al. in 2011 \cite{Li}.  In order to formalize this concept mathematically, spatial graph theorists define an embedding $G$ of an abstractly planar graph in $\mathbb{R}^3$ to be a \emph{ravel} if $G$ is non-planar but contains no non-trivial knots or links.  Such embedded graphs are closely related to {\it almost trivial graphs}, which are abstractly planar graphs with non-planar embeddings such that removing any edge from the embedded graph makes it planar.  In fact, any almost trivial graph is a ravel, though the converse is not true.  Probably the most famous example of such a graph is Kinoshita's $\theta$-curve \cite{Kinoshita, Kino}.  In order to extend the idea of a ravel to graphs which are not abstractly planar, we consider embedded graphs which contain no non-trivial knots yet contain at least one cycle which does not bound a disk in the complement of the graph.  In particular, we have the following definition.

\begin{defn}   A graph embedded in $\mathbb{R}^3$ is said to be \textit{paneled} if every cycle in the graph bounds a disk whose interior is disjoint from the graph.
\end{defn}

A number of significant results have been obtained about paneled graphs.  Of particular note, Robertson, Seymour, and Thomas proved that a graph has a linkless embedding if and only if it has a paneled embedding, which in turn occurs if and only if the graph does not contain one of the seven graphs in the Petersen family as a minor \cite{robertson95}.  In the same paper, they showed that a given embedding of a graph is paneled if and only if the complement of every subgraph has free fundamental group.  In addition, they proved that up to homeomorphism, $K_{3,3}$ and $K_5$ each have a unique paneled embedding.

A piecewise linear embedding of a knot, link, or graph in $\mathbb{R}^3$ is said to be a {\it stick} embedding.  The {\it stick number} of a knot or link is the smallest number of sticks that are required to construct it. Numerous results have been obtained about the stick number of knots and links.  But the concept of stick number can also be applied to embedded graphs.  In particular, we define the {\it non-paneled knotless  stick number} of a graph to be the minimum number of sticks required to create an embedding of the graph which is not paneled and yet contains no knots.  For example, Huh and Oh \cite{huh09} showed that the non-paneled knotless stick number of a $\theta$-graph is 8.  

We are interested in the non-paneled knotless stick number of complete graphs.  Note that $K_3$ is paneled if and only if it is knotless, and every embedding of $K_n$ with $n\geq 7$ contains a non-trivial knot \cite{conway83}.  Thus neither $K_3$ nor $K_n$ with $n\geq 7$ can have a non-paneled knotless stick number.  On the other hand, there is a linear embedding of $K_6$ which contains no knot, and we know from \cite{robertson95} that no embedding of $K_6$ is paneled.  Hence the non-paneled knotless stick number of $K_6$ is 15 (its number of edges).  So the only complete graphs whose non-paneled knotless stick number is unknown are $K_4$ and $K_5$.

In Section~2, we show that the non-paneled knotless stick number of $K_4$ is 9.  Then in Section~3, we apply this result to study embeddings of $K_{3,3}$ as subgraphs of metalloproteins, and offer a possible explanation for why such subgraphs seem to occur primarily in the form of a M\"{o}bius ladder (i.e., a M\"{o}bius strip where the surface is replaced by a ladder with three rungs).  Finally, in Section~4, we show that the non-paneled knotless stick number of $K_5$ is either 12 or 13.

\section{Stick number for non-paneled knotless embeddings of $K_4$ }
	
In order to determine the stick number of a non-paneled knotless $K_4$, we note that the minimum number of sticks of an embedding of $K_4$ is equal to the number of edges, which is 6. Thus our first step is to show that every $6$-stick and $7$-stick embedding of $K_4$ is paneled.  Then we will analyze the different cases for an $8$-stick embedding --- in the first case, the embedding of $K_4$ has a single edge consisting of three sticks; in the other cases, the embedding has two different edges each composed of two sticks, and these two edges may or may not share a vertex. Our basic strategy is to restrict the number of configurations we need to analyze by noting that if an embedding of $K_4$ can be isotoped to one that only has 7 sticks by removing one of the degree 2 vertices, then the embedding must be paneled. We use the following definition from \cite{huh09}.

\begin{defn}
	Let $P$ be a linear embedding of a graph $G$. A triangle determined by two adjacent sticks is said to be
	\textit{reducible} if its interior is disjoint from the edges of $P$. Otherwise, it is said to be \textit{irreducible}.
\end{defn}

Let $P$ be a linear embedding of a graph $G$ with a degree $2$ vertex $v$ adjacent to vertices $v_1$ and $v_2$.   If the triangle determined by the edges $\overline{vv_1}$ and $\overline{vv_2}$ is reducible, then the image of the embedding $P$ is ambient isotopic to one where the $2$-stick segment $\overline{v_1vv_2}$ is replaced by a single stick $\overline{v_1v_2}$. Hence, if a non-paneled knotless stick embedding of a graph has a minimum number of sticks, then every triangle determined by the edges incident to a degree $2$ vertex must be irreducible. We utilize this property to restrict the possible configurations of non-paneled embeddings of $K_4$.

\begin{lem}\label{nonpaneledK4}
	A non-paneled stick embedding of $K_4$ must contain at least $8$ sticks.
\end{lem}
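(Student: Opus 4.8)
The plan is to show that every stick embedding of $K_4$ using six or seven sticks is paneled. Label the vertices of $K_4$ by $v_1,\dots,v_4$. A six‑stick embedding realizes each edge as one stick, so the $v_i$ are the vertices of a (possibly degenerate) tetrahedron. If they are coplanar, the embedding is planar and hence paneled. Otherwise the four triangular cycles are the four faces of the tetrahedron, which are already panels, while each of the three $4$‑cycles bounds the union of two adjacent faces; pushing the common edge of those two faces slightly into the interior of the tetrahedron (which is disjoint from the graph) turns this union into a panel. So every six‑stick $K_4$ is paneled. A seven‑stick embedding has exactly one degree‑$2$ vertex $v$, lying on a single subdivided edge; after relabeling we may assume this edge joins $v_1$ and $v_2$ and is the $2$‑stick path $v_1vv_2$, while $\overline{v_1v_3},\overline{v_1v_4},\overline{v_2v_3},\overline{v_2v_4},\overline{v_3v_4}$ are single sticks. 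Put $T=\triangle v_1vv_2$. If $T$ is reducible the embedding is ambient isotopic to one in which $v_1vv_2$ is replaced by the single stick $\overline{v_1v_2}$, i.e.\ to a six‑stick embedding; since being paneled is an ambient‑isotopy invariant, we are done by the previous case.

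It remains to handle a seven‑stick embedding in which $T$ is irreducible, so some edge meets the interior of $T$. After a small ambient isotopy we may assume neither $v_3$ nor $v_4$ lies in the plane of $T$; then the key observation is that $\overline{v_3v_4}$ is the \emph{only} edge that can meet the interior of $T$. Indeed $\overline{v_1v}$ and $\overline{v_2v}$ are sides of $T$, and each of $\overline{v_1v_3},\overline{v_1v_4},\overline{v_2v_3},\overline{v_2v_4}$ shares an endpoint with $T$ and is not contained in the plane of $T$, so it meets that plane only at the shared endpoint, which is a corner of $T$ rather than an interior point. Hence $\overline{v_3v_4}$ crosses the interior of $T$ in exactly one point $p$, and this single puncture is the embedding's only deviation from being ``straight''.

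To finish I would produce a panel for each of the seven cycles of $K_4$: the triangles $\triangle v_1v_3v_4$ and $\triangle v_2v_3v_4$ and the $4$‑cycle $v_1v_3v_2v_4$, which use only straight sticks, and the four cycles through $v$ — two quadrilaterals $v_1vv_2v_i$ and two pentagons $v_1vv_2v_jv_k$. For $\triangle v_1v_3v_4$ and $\triangle v_2v_3v_4$ the same shared‑endpoint reasoning shows the flat triangle they bound can be met only by the stick $\overline{vv_2}$, respectively $\overline{vv_1}$, and where it is, a small pushoff of the flat triangle across that stick gives a panel. The straight $4$‑cycle and the two quadrilaterals through $v$ are spanned by a union of at most two straight triangles, together with a local pushoff of the interior of $T$ off the point $p$ when $T$ is one of those triangles. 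The delicate cases are the two pentagons: here I would span the pentagon by the cone on a carefully chosen interior point $q$ (near its centroid, say), and check that the cone is an embedded disk and that each of its five triangular faces meets the two $K_4$‑edges missing from the pentagon only at shared vertices, so that the interior of the cone avoids the embedding. I expect the main obstacle to be exactly this pentagon step — verifying through the finitely many positional sub‑cases that the apex $q$ can always be located (equivalently, that the spanning disks can always be routed) so as to miss those two ``bad'' edges while remaining embedded, the embeddedness of the cone on a non‑convex space pentagon being the fussiest point. One should not expect to dodge this by reducing to the six‑stick case, since the irreducible configuration need not be ambient isotopic to a planar embedding.
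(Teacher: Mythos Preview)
Your six-stick argument and the reduction of the reducible seven-stick case are fine and match the paper. The divergence is in the irreducible seven-stick case, and there your proposal is both incomplete and working much harder than necessary. You explicitly leave the pentagon panels as a sketch (``I would span\ldots'', ``I expect the main obstacle\ldots''), so as written this is not a proof; and your closing sentence --- that the irreducible configuration ``need not be ambient isotopic to a planar embedding'' --- is precisely the point on which the paper disagrees with you, and it is the key to a short argument.

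The paper does not build panels cycle by cycle. Instead, once it is established that $\overline{v_3v_4}$ (in your labeling; $\overline{v_4v_5}$ in the paper's) is the unique stick piercing $T$, the paper simply observes that the full configuration --- the pierced triangle together with the four straight sticks from the triangle's corners $v_1,v_2$ to the endpoints $v_3,v_4$ of the piercing edge --- is ambient isotopic to the standard planar $K_4$, hence paneled. Concretely: because $v_1$ and $v_2$ are each joined by straight sticks to \emph{both} ends of $\overline{v_3v_4}$, the apparent ``wrap'' of the arc $v_1vv_2$ around $\overline{v_3v_4}$ can be undone by sliding $v$ over one of those cone points (equivalently, the triangle $\langle v_1,v_3,v_4\rangle$ gives a disk across which the arc can be pushed). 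One picture settles it. So your instinct that the irreducible case cannot be reduced to a standard embedding is the actual gap: it can, and doing so replaces your entire panel-by-panel program with a single isotopy.
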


\begin{proof}
	An embedding of $K_4$ which has only $6$ sticks is a linear embedding, which is paneled because it necessarily has the form of a tetrahedron.
	
	Suppose that there exists a non-paneled stick embedding of $K_4$ which has only $7$ sticks. Then precisely one of the edges is composed of $2$ sticks. We can think of the stick graph as a linear embedding of a graph with five vertices one of which has degree $2$. Label the vertices $v_1$, $v_2$, $v_3$, $v_4$, $v_5$, where
$v_2$ is the vertex of degree $2$ and is adjacent to $v_1$ and $v_3$. The triangle $\langle v_1,v_2,v_3\rangle$ must be irreducible, since otherwise the embedding would be isotopic to an embedding with only $6$ sticks which we saw is paneled.  Thus the edge $\overline{v_4v_5}$ must
    	pierce the triangle $\langle v_1,v_2,v_3\rangle$ as illustrated in Figure \ref{fig:k4_7}.

But now adding back the linear segments connecting vertices $v_1$ and $v_3$ to vertices $v_4$ and $v_5$ (illustrated with dotted segments in Figure \ref{fig:k4_7}) yields an embedding of $K_4$
	that is isotopic to a paneled one, contradicting the assumption that our embedding is not paneled.  \end{proof}
	
	\begin{figure}[h]
		\includegraphics[scale=.9]{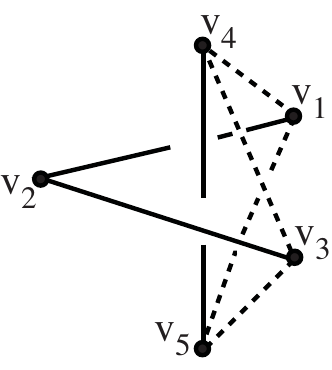}
		\caption{Even though the triangle $\langle v_1,v_2,v_3\rangle$ is pierced by $\overline{v_4v_5}$, this embedding is paneled.}
		\label{fig:k4_7}
	\end{figure}

The following lemmas deal with the three combinatorially distinct types of $8$-stick embeddings of $K_4$.

\begin{lem}\label{3Sticks}
	An $8$-stick embedding of $K_4$ with a single edge consisting of $3$ sticks is either paneled or contains a knot.
\end{lem}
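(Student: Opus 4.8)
The plan is to fix notation as follows. Label the vertices of $K_4$ by $a,b,c,d$ and assume the $3$-stick edge is $\overline{ab}$, written as the path $a\,v_1\,v_2\,b$ through the two subdivision points $v_1,v_2$, while the remaining five edges $\overline{ac},\overline{ad},\overline{bc},\overline{bd},\overline{cd}$ are single sticks. The first step is the reduction already used in Lemma~\ref{nonpaneledK4}: if either of the triangles $\langle a,v_1,v_2\rangle$ or $\langle v_1,v_2,b\rangle$ (spanned by the two sticks at the degree-$2$ vertices $v_1$ and $v_2$) is reducible, then the embedding is ambient isotopic to a $7$-stick embedding of $K_4$ and hence, by Lemma~\ref{nonpaneledK4}, is paneled. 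So I may assume both of these triangles are irreducible.

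Second, I would extract the combinatorics forced by irreducibility. The only sticks that can meet the interior of $\langle a,v_1,v_2\rangle$ are the ones disjoint from $\{a,v_1,v_2\}$, namely $\overline{bc},\overline{bd},\overline{cd}$, and among three points in general position (here $b,c,d$) at most two of the three segments joining them can cross a given plane. Hence exactly one or two of $\overline{bc},\overline{bd},\overline{cd}$ pierce $\langle a,v_1,v_2\rangle$, and symmetrically exactly one or two of $\overline{ac},\overline{ad},\overline{cd}$ pierce $\langle v_1,v_2,b\rangle$; in particular $\overline{cd}$ is the only stick that can pierce both triangles. This reduces the problem to a short finite list of piercing patterns.

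Third, for each pattern I would establish the dichotomy. The only cycles of this embedding that use at least six sticks are the two Hamiltonian hexagons $H_1=a\,v_1\,v_2\,b\,c\,d\,a$ and $H_2=a\,v_1\,v_2\,b\,d\,c\,a$ (every other cycle uses at most five sticks, hence is unknotted), and a knotted hexagon must be a trefoil; so in the ``knotted'' patterns I would show that the over/under data forced along $H_1$ or $H_2$ is that of a trefoil, e.g.\ by projecting onto the plane of a pierced triangle and reading off the three essential crossings --- the model case being $\overline{cd}$ piercing both $\langle a,v_1,v_2\rangle$ and $\langle v_1,v_2,b\rangle$, so that $\overline{cd}$ double-punctures the disk $\langle a,v_1,v_2\rangle\cup\langle v_1,v_2,b\rangle$ that contains the arc $a\,v_1\,v_2\,b$. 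In the ``unknotted'' patterns I would instead prove the embedding is paneled, either by exhibiting disks directly for the four $3$-cycles and three $4$-cycles, or, more efficiently, by deleting one edge $e$ to obtain a $7$-stick (subdivided) $\theta$-graph: by Huh--Oh's theorem \cite{huh09} it is paneled (it cannot be knotted, or our $K_4$ would contain a knot), hence isotopic to the standard planar $\theta$-curve, after which one checks that in the surviving patterns the edge $\overline{e}$ can be pushed into a complementary disk.

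The step I expect to be the main obstacle is the ``unknotted'' half of the case analysis, i.e.\ ruling out a ravel. Producing a trefoil is a finite crossing computation once the piercing pattern is fixed, but certifying that an embedding containing a pierced (irreducible) triangle is nonetheless paneled is delicate: the pierced triangle itself is not a panel, so one must either build all seven panels by hand or control precisely how the last edge lies after a $\theta$-subgraph has been straightened. The genuinely new ingredient beyond Lemma~\ref{nonpaneledK4} is combining the irreducibility of \emph{both} subdivision triangles with the scarcity of sticks, and keeping that bookkeeping consistent --- especially in the symmetric pattern where $\overline{cd}$ pierces both triangles --- is where the real work of the lemma lies.
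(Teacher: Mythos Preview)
Your setup matches the paper's exactly: both reduce to the situation where the two triangles at the subdivision vertices are irreducible, and your enumeration of which sticks can pierce each triangle is correct and is precisely the list the paper uses (in their notation the piercing candidates for $\langle v_4,v_6,v_5\rangle$ are $\overline{v_1v_2},\overline{v_1v_3},\overline{v_2v_3}$ and for $\langle v_3,v_5,v_6\rangle$ are $\overline{v_1v_2},\overline{v_1v_4},\overline{v_2v_4}$). From there the paper simply runs the finite case analysis with pictures, and that is where the proof actually lives; your proposal stops just before this and substitutes heuristics that do not line up with what the case analysis reveals.

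The specific gap is your organizing intuition that ``$\overline{cd}$ double-punctures the bent disk'' is the knotted model. In the paper's analysis this is false in both directions. In their Case~3, when $\overline{v_1v_2}$ (your $\overline{cd}$) pierces \emph{both} triangles and in addition $\overline{v_1v_4}$ pierces the second one, the embedding is paneled, not knotted; so the double-puncture alone does not force a trefoil. Conversely, in their Case~1 the knotted subcase arises with $\overline{v_2v_3}$ piercing one triangle and $\overline{v_1v_4}$ the other, while $\overline{v_1v_2}$ pierces neither --- so there is a knotted pattern your heuristic does not flag at all. The dichotomy is genuinely governed by the over/under position of the remaining edge relative to the piercing edge (e.g.\ whether $\overline{v_1v_4}$ passes in front of or behind $\overline{v_2v_3}$), not by which sticks pierce which triangle.

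Your fallback for the paneled half --- delete an edge, cite Huh--Oh to trivialize the resulting $7$-stick $\theta$-curve, then reinsert the edge --- does not buy anything: after straightening the $\theta$, you still have to control how the reinserted edge sits relative to the three lobes, which is exactly the crossing bookkeeping you were trying to avoid. The paper does not attempt any such reduction; it simply draws each configuration and reads off ``paneled'' or ``trefoil'' directly. Your plan is on the right track structurally, but to close it you need to carry out that same picture-by-picture analysis rather than rely on the $\overline{cd}$ heuristic.
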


\begin{proof} We consider an $8$-stick embedding of $K_4$ as a graph on 6 vertices. 
	We denote the vertices of $K_4$ by $v_1$, $v_2$, $v_3$, $v_4$, where the edge between $v_3$ and $v_4$ has $3$
	sticks with intermediate degree 2 vertices $v_5$ and $v_6$, so that the edge from $v_3$ to $v_4$ is the path $\overline{v_3v_5v_6v_4}$.
	
	We saw in Lemma~\ref{nonpaneledK4} that the stick number of a non-paneled $K_4$ is at least $8$.  Thus if the embedding of $K_4$ is non-paneled, then the
	triangles $\langle v_4,v_6,v_5\rangle$ and $\langle v_3,v_5,v_6\rangle$ must be irreducible. Without loss of generality, either $\overline{v_1v_2}$ or $\overline{v_2v_3}$
	pierces the triangle $\langle v_4,v_6,v_5\rangle$.  In either case, up to an affine transformation, we may  
	assume that the piercing edge is orthogonal to the triangle $\langle v_4,v_6,v_5\rangle$,
	which lies in the horizontal plane. 
	\medskip 
	
	\noindent \textbf{Case 1:} $\overline{v_2v_3}$ pierces $\langle v_4,v_6,v_5\rangle$ and $\overline{v_1v_2}$ does not pierce $\langle v_4,v_6,v_5\rangle$.

	In order for the triangle $\langle v_3, v_5, v_6 \rangle$ to be irreducible,
	$\overline{v_1v_4}$ must intersect the triangle. There are two subcases according to whether $\overline{v_1v_4}$ goes in front or behind $\overline{v_2v_3}$.  In the first subcase the embedding contains a knot, while in the second subcase the embedding is paneled (see Figure \ref{fig:k431}).

	\begin{figure}[h]
			\includegraphics[scale=.9]{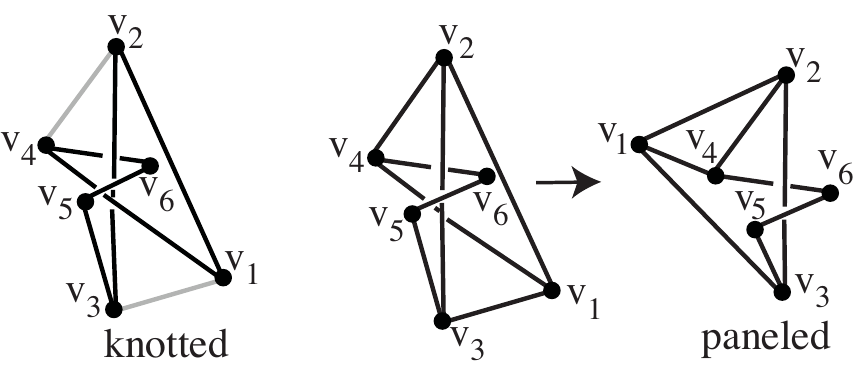}
		\caption{If $\overline{v_2v_3}$ pierces $\langle v_4, v_6, v_5 \rangle$ and $\overline{v_1v_2}$ does not, the embedding either contains a knot or is paneled.}
			\label{fig:k431}
		\end{figure}

\noindent \textbf{Case 2:}  Both $\overline{v_1v_2}$ and $\overline{v_2v_3}$ pierce $\langle v_4, v_6, v_5\rangle$

As in Case 1, there are two subcases according to whether $\overline{v_1v_4}$ goes in front or behind $\overline{v_2v_3}$.  However, both embeddings are isotopic to the same paneled embedding as illustrated in Figure~\ref{fig:k433}.
		
		\begin{figure}[h]
			\includegraphics[scale=.9]{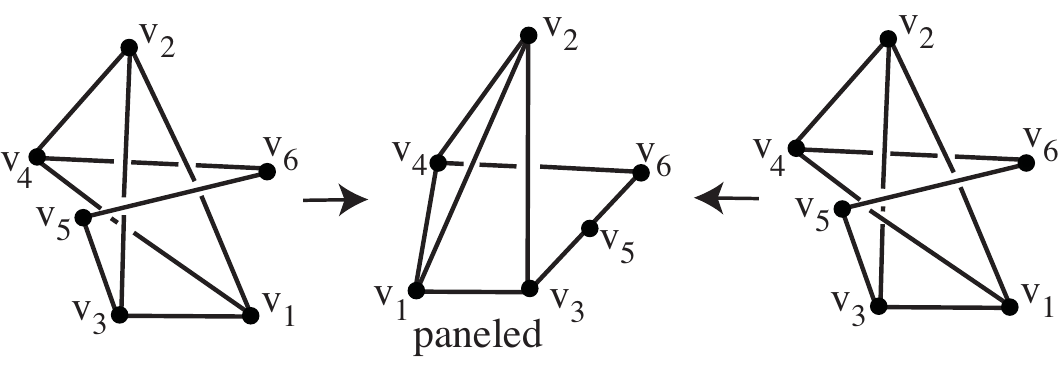}
	\caption{If both $\overline{v_1v_2}$ and $\overline{v_2v_3}$ pierce $\overline{v_4v_6v_5}$, the embedding is paneled.}
	\label{fig:k433}
	\end{figure}

	\medskip

	\noindent \textbf{Case 3:} $\overline{v_1v_2}$ pierces $\langle v_4, v_6, v_5 \rangle$ and $\overline{v_2v_3}$ does not pierce $\langle v_4,v_6, v_5\rangle$.

	In this case, the irreducibility of the triangle $\langle v_3,v_5,v_6\rangle$ implies that, up to symmetry, either $\overline{v_1v_4}$ or $\overline{v_1v_2}$ pierces it. If $\overline{v_1v_4}$ pierces $\langle v_3,v_5,v_6\rangle$, then (since $\overline{v_1v_2}$ is orthogonal to $\langle v_4,v_6,v_5\rangle$) so must $\overline{v_1v_2}$.  Hence we obtain the  paneled embedding in Figure~\ref{fig:k4_3_6}.  
	\begin{figure}[h]
			\includegraphics[scale=.9]{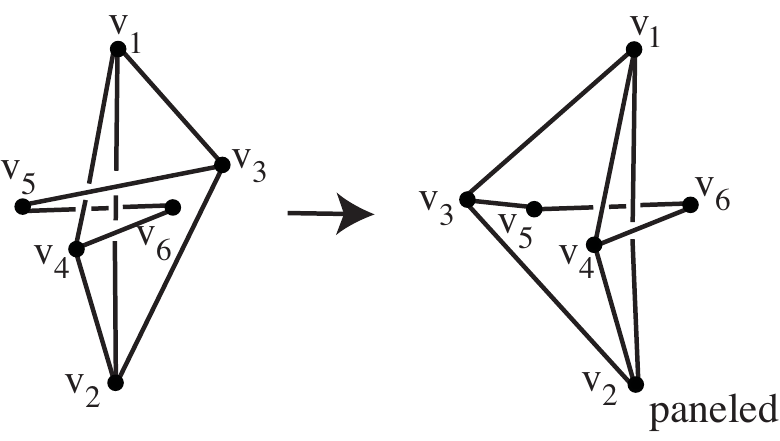}
			\caption{If $\overline{v_1v_4}$ pierces $\langle v_3,v_5,v_6\rangle$, then so must $\overline{v_1v_2}$.  Hence the embedding is paneled. }
			\label{fig:k4_3_6}
		\end{figure}
		
		Otherwise, $\overline{v_1v_2}$ pierces $\langle v_3,v_5,v_6\rangle$ and $\overline{v_1v_4}$ does not.  Now there are two subcases according to whether $v_3$ is below or above the plane determined by the triangle $\langle v_4,v_6,v_5\rangle$.  However, in both subcases the embedding contains a knot, as illustrated in Figure~\ref{fig:k4_3_5}.
			\end{proof}

		\begin{figure}[h]
			\includegraphics[scale=.9]{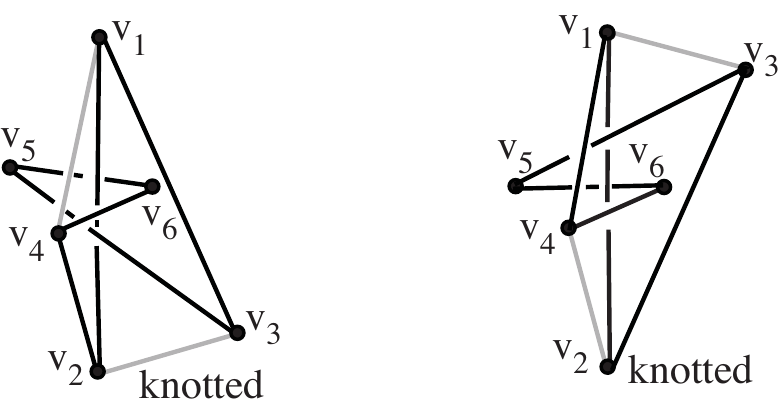}
			\caption{If $\overline{v_1v_2}$ pierces $\langle v_3,v_5,v_6\rangle$, then the embedding contains a knot.}
			\label{fig:k4_3_5}
		\end{figure}

\begin{lem}\label{K4Subgraph}
	An $8$-stick embedding of $K_4$ with two disjoint edges each consisting of two sticks is either
	paneled or contains a knot.
\end{lem}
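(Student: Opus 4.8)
The plan is to follow the same case-analysis strategy used in the proof of Lemma~\ref{3Sticks}. Label the vertices of $K_4$ by $v_1, v_2, v_3, v_4$, and suppose the two disjoint two-stick edges are $\overline{v_1 v_2}$ (with intermediate degree~2 vertex $v_5$, so this edge is the path $\overline{v_1 v_5 v_2}$) and $\overline{v_3 v_4}$ (with intermediate degree~2 vertex $v_6$, so this edge is the path $\overline{v_3 v_6 v_4}$). The remaining four edges $\overline{v_1 v_3}$, $\overline{v_1 v_4}$, $\overline{v_2 v_3}$, $\overline{v_2 v_4}$ are single sticks. By Lemma~\ref{nonpaneledK4}, if the embedding is non-paneled it has at least $8$ sticks, so a minimal non-paneled embedding of this combinatorial type must have both triangles $\langle v_1, v_5, v_2\rangle$ and $\langle v_3, v_6, v_4\rangle$ irreducible. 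Thus each of these triangles must be pierced by some stick of the embedding. The sticks available to pierce $\langle v_1, v_5, v_2\rangle$ are the four single sticks together with the two sticks $\overline{v_3 v_6}$, $\overline{v_6 v_4}$ of the other two-stick edge (no stick incident to $v_1$, $v_2$, or $v_5$ can pierce it), and symmetrically for $\langle v_3, v_6, v_4\rangle$.

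Next I would normalize: up to an affine transformation, place the triangle $\langle v_3, v_6, v_4\rangle$ in the horizontal plane and take the stick that pierces it to be orthogonal to that plane. The case split is then according to \emph{which} stick pierces $\langle v_3, v_6, v_4\rangle$ — by the symmetry $v_1 \leftrightarrow v_2$ it suffices to consider the piercing stick being $\overline{v_1 v_3}$, $\overline{v_1 v_4}$, $\overline{v_1 v_5}$, $\overline{v_5 v_2}$ (the last two being the two sub-sticks of the path $\overline{v_1 v_5 v_2}$), and possibly more than one of these may pierce simultaneously. In each case, irreducibility of the second triangle $\langle v_1, v_5, v_2\rangle$ forces one of the remaining sticks to pierce it as well, and as in Lemma~\ref{3Sticks} one typically gets a further binary subcase according to whether some connecting stick passes in front of or behind the orthogonal piercing stick. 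For each resulting configuration I would either identify an isotopy to the standard paneled (tetrahedral) embedding of $K_4$ — recognizing the paneled case by checking that each of the four triangular faces bounds a disk disjoint from the rest of the graph, equivalently by Robertson–Seymour–Thomas that every subgraph has free complement — or else exhibit a non-trivial knot (necessarily a trefoil, since an $8$-stick cycle cannot carry anything more complicated) sitting on one of the Hamiltonian cycles of $K_4$, namely a cycle of the form $v_1 v_5 v_2 v_3 v_6 v_4$ or similar using both two-stick edges. Supporting figures analogous to Figures~\ref{fig:k431}–\ref{fig:k4_3_5} would accompany the knotted and paneled conclusions.

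The main obstacle is bookkeeping: making sure the case tree is genuinely exhaustive and that the symmetries of $K_4$ (the automorphism group acts on this configuration type preserving the pair of two-stick edges, giving a group of order $8$ generated by $v_1\leftrightarrow v_2$, $v_3\leftrightarrow v_4$, and the swap of the two two-stick edges $\{v_1,v_2\}\leftrightarrow\{v_3,v_4\}$) are used to collapse the number of essentially distinct cases to a manageable list. A secondary technical point is that when a triangle is pierced by a stick belonging to the \emph{other} two-stick edge rather than by a single stick, one must be careful that the reduction argument of Lemma~\ref{nonpaneledK4} still applies — but since both two-stick triangles are assumed irreducible, no such reduction to $7$ sticks is available, and the piercing data is exactly what is needed to pin down the embedding up to isotopy. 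Once the configurations are enumerated, verifying "paneled or knotted" in each is routine, just as in the proof of Lemma~\ref{3Sticks}.
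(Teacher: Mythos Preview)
Your overall strategy---reduce via irreducibility of the two triangles, normalize by an affine map, and case-split on which stick pierces each triangle---is exactly the paper's approach. However, your case enumeration contains a concrete error that, while not fatal, causes you to miss the key simplification that makes the argument clean.

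You list $\overline{v_1 v_3}$ and $\overline{v_1 v_4}$ among the sticks that could pierce $\langle v_3, v_6, v_4\rangle$. But each of these shares a vertex with that triangle, so neither can pierce its interior. (You stated precisely this constraint in your own parenthetical remark about the other triangle, then failed to apply it here.) In fact, of the eight sticks, the \emph{only} ones not incident to $\{v_3,v_4,v_6\}$ are $\overline{v_1 v_5}$ and $\overline{v_5 v_2}$---the two sub-sticks of the other two-stick edge. By the symmetry $v_1\leftrightarrow v_2$ this gives a single case, not four. Symmetrically, the only sticks that can pierce $\langle v_1,v_5,v_2\rangle$ are $\overline{v_3 v_6}$ and $\overline{v_6 v_4}$, and the symmetry $v_3\leftrightarrow v_4$ reduces this to one case as well. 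So after normalization there is essentially \emph{one} base configuration: $\overline{v_3 v_6}$ (say) pierces $\langle v_1,v_5,v_2\rangle$ orthogonally, and $\overline{v_5 v_2}$ (say) pierces $\langle v_3,v_6,v_4\rangle$. The remaining freedom is only in the position of the last vertex ($v_4$ in your labeling), and the paper's proof dispatches this with a handful of sub-cases according to which side of the horizontal plane it lies on and how one connecting edge crosses the others. Your four-way primary split would, after discarding the two vacuous cases, collapse to this same analysis---but as written the proposal obscures how short the argument really is.
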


\begin{proof}  We again consider an $8$-stick embedding of $K_4$ as a graph on 6 vertices.  Label the vertices $v_1$, $v_2$, $v_3$, $v_4$, $v_5$, $v_6$ so that $v_2$ and $v_5$ each have degree 2 and are adjacent to
	$v_1$, $v_3$ and $v_4$, $v_6$, respectively. Thus there is no edge between $v_1$ and $v_3$ and no edge between $v_4$ and $v_6$.  
	
	Suppose the embedding is non-paneled.  Then the triangle $\langle v_1,v_2, v_3\rangle$ must be
	irreducible, since otherwise the $K_4$ would be isotopic to a $7$-stick embedding, which we saw is necessarily paneled.  Thus, without loss of generality, the edge $\overline{v_4v_5}$ pierces the triangle $\langle v_1,v_2, v_3\rangle$. Up to an affine transformation, we may further 
	assume that $\overline{v_4v_5}$ is orthogonal to the triangle $\langle v_1,v_2, v_3\rangle$. 
	
	Now, the triangle $\langle v_4,v_5, v_6\rangle$ must also be irreducible, since otherwise the embedding would again be isotopic to a $7$-stick embedding.
	So either $\overline{v_1v_2}$ or $\overline{v_2v_3}$ pierces the triangle $\langle v_4,v_5, v_6\rangle$. Up to symmetry, these two cases are the same; so we assume $\overline{v_2v_3}$ pierces the triangle $\langle v_4,v_5, v_6\rangle$.

	Note we can always isotope $v_6$ out of the plane determined by $\langle v_1,v_2,v_3\rangle$.  If $v_6$ is below the plane of $\langle v_1,v_2,v_3\rangle$, then we have to consider the cases where the edge $\overline{v_1v_6}$ is in front of or behind the edge $\overline{v_4v_5}$.  However, as we see in Figure~\ref{8sticks1}, both of these cases are paneled.
	
	\begin{figure}[h]
			\includegraphics[scale=.9]{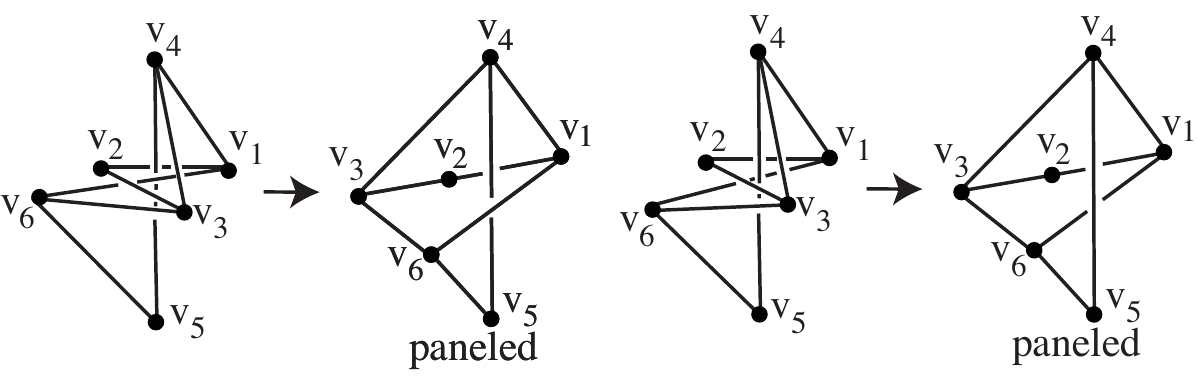}
			\caption{If $v_6$ is below the plane, then the embedding is paneled.}
			\label{8sticks1}
			\end{figure}
	
	If $v_6$ is above the plane determined by $\langle v_1,v_2,v_3\rangle$, then we consider whether the edge $\overline{v_1v_6}$ is in front of or behind $\overline{v_4v_5}$ and $\overline{v_3v_4}$.	  As shown in Figures~\ref{8sticks3}-\ref{8sticks5}, two of the embeddings are paneled and
	the third contains a knot.\end{proof}

		\begin{figure}[h]
			\includegraphics[scale=.9]{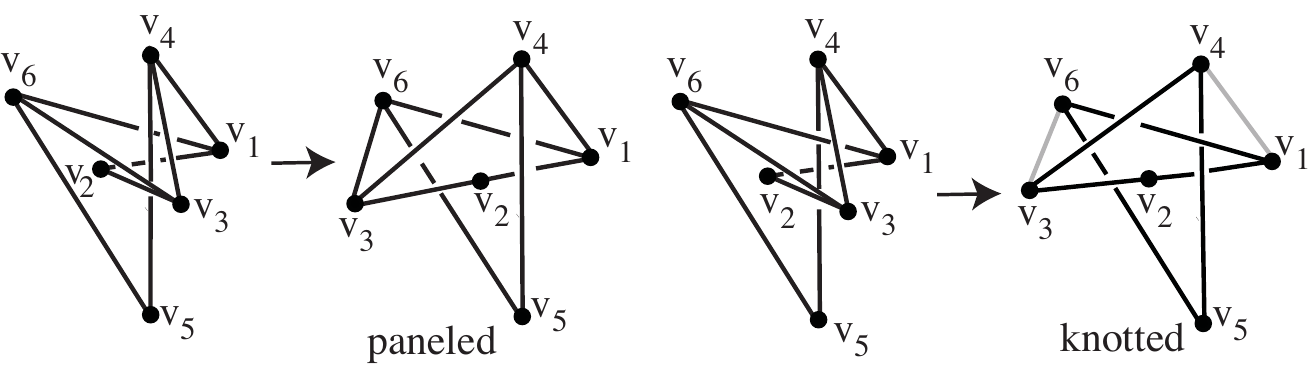}
			\caption{Here $v_6$ is above the plane.  If $\overline{v_1v_6}$ is behind $\overline{v_4v_5}$ and $\overline{v_3v_4}$, the embedding is paneled.  If $\overline{v_1v_6}$ is in front of $\overline{v_4v_5}$ but behind $\overline{v_3v_4}$, the embedding contains a knot.}\label{8sticks3}
			\end{figure}

		\begin{figure}[h]
			\includegraphics[scale=.9]{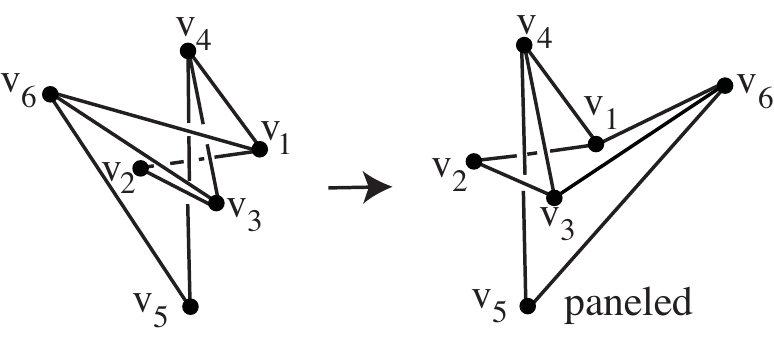}
	\caption{If $v_6$ is above the plane and $\overline{v_1v_6}$ is in front of both $\overline{v_4v_5}$ and $\overline{v_3v_4}$, then the embedding is paneled.}
	\label{8sticks5}
			\end{figure}

\begin{lem}\label{Adjacent}
	An $8$-stick embedding of $K_4$ such that two adjacent edges consist of two sticks each is either paneled or contains a knot.
\end{lem}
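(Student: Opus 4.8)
The plan is to follow the template established in Lemmas~\ref{3Sticks} and \ref{K4Subgraph}, treating the $8$-stick embedding of $K_4$ as a linear embedding of a graph on six vertices. Label the vertices of $K_4$ by $v_1,v_2,v_3,v_4$, and take the two adjacent two-stick edges to be the paths $\overline{v_1v_5v_2}$ (in place of edge $v_1v_2$) and $\overline{v_2v_6v_3}$ (in place of edge $v_2v_3$), so that $v_5$ and $v_6$ are the degree-$2$ vertices and the eight sticks are $\overline{v_1v_5}$, $\overline{v_5v_2}$, $\overline{v_2v_6}$, $\overline{v_6v_3}$, $\overline{v_1v_3}$, $\overline{v_1v_4}$, $\overline{v_2v_4}$, $\overline{v_3v_4}$. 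If the embedding is non-paneled, then by Lemma~\ref{nonpaneledK4} neither of the triangles $T_1=\langle v_1,v_5,v_2\rangle$ and $T_2=\langle v_2,v_6,v_3\rangle$ can be reducible, since collapsing a reducible triangle would produce a $7$-stick embedding, which we have seen is paneled. Because a stick of an embedded graph cannot pass transversally through the interior of a triangle that shares one of its endpoints without crossing a boundary edge, the only sticks that can pierce $T_1$ are $\overline{v_6v_3}$ and $\overline{v_3v_4}$, and the only sticks that can pierce $T_2$ are $\overline{v_1v_5}$ and $\overline{v_1v_4}$.

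Next I would exploit the symmetry $\sigma$ that interchanges $v_1\leftrightarrow v_3$ and $v_5\leftrightarrow v_6$ while fixing $v_2$ and $v_4$ (so $\sigma$ swaps $T_1$ and $T_2$) to cut down the casework. Up to $\sigma$, the configurations fall into three cases according to which sticks pierce $T_1$ and $T_2$: (i) $\overline{v_6v_3}$ pierces $T_1$ and $\overline{v_1v_5}$ pierces $T_2$; (ii) $\overline{v_6v_3}$ pierces $T_1$ and $\overline{v_1v_4}$ pierces $T_2$ (equivalently, after applying $\sigma$, $\overline{v_3v_4}$ pierces $T_1$ and $\overline{v_1v_5}$ pierces $T_2$); and (iii) $\overline{v_3v_4}$ pierces $T_1$ and $\overline{v_1v_4}$ pierces $T_2$. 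In each case, as in the earlier lemmas, I would apply an affine transformation placing $T_1$ in a horizontal plane with its piercing stick vertical, and then enumerate the finitely many ways to position the remaining sticks: which strand lies in front at each apparent crossing of a generic projection, and whether each remaining vertex lies above or below the plane of $T_1$.

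For every resulting configuration I would either exhibit, for each of the seven cycles of $K_4$, a disk whose interior is disjoint from the graph (so the embedding is paneled), or exhibit a knotted cycle. Since the only cycles of $K_4$ are its four triangles and its three Hamiltonian $4$-cycles, and the $4$-cycle $v_1v_2v_3v_4$ is a hexagon $v_1v_5v_2v_6v_3v_4$ after subdivision, any knot that appears must be a trefoil carried by this hexagon, as the stick number of the trefoil is $6$. Each case would be documented with a figure, exactly as in Figures~\ref{fig:k431}--\ref{8sticks5}.

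The main obstacle is bookkeeping rather than any new idea: vertex $v_2$ is incident to both degree-$2$ vertices, so the relative positions of $\overline{v_5v_2}$, $\overline{v_2v_6}$, and $\overline{v_2v_4}$ introduce more branching than in the $3$-stick or disjoint $2$-stick cases. The delicate point in each knotless subcase is to certify that the embedding is genuinely paneled and not merely unknotted --- that is, to check that the diagram simplifies to a planar one so that a panel can be chosen for every cycle --- and, in the remaining subcases, to verify cleanly that the hexagonal cycle is knotted. I expect the arguments used for Lemmas~\ref{3Sticks} and \ref{K4Subgraph} to suffice, once the cases are organized as above.
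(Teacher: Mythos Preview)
Your plan follows essentially the same template as the paper: view the $8$-stick $K_4$ as a linear graph on six vertices, force both triangles $T_1=\langle v_1,v_5,v_2\rangle$ and $T_2=\langle v_2,v_6,v_3\rangle$ to be irreducible, identify the only candidate piercing sticks ($\overline{v_3v_6}$ or $\overline{v_3v_4}$ through $T_1$; $\overline{v_1v_5}$ or $\overline{v_1v_4}$ through $T_2$), normalise by an affine transformation, and then analyse the finitely many diagrams.

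The one substantive difference is in how the four piercing combinations are cut down.  You use the involution $\sigma$ to reduce to three cases (i)--(iii).  The paper instead makes a geometric observation that eliminates two of the four combinations outright: after making the piercing edge orthogonal to $T_1$, one sees that if $\overline{v_3v_6}$ pierces $T_1$ then $\overline{v_1v_5}$ \emph{cannot} pierce $T_2$ (the segment $\overline{v_1v_5}$ lies in the plane of $T_1$, and the trace of $T_2$ on that plane is a segment from $v_2$ to an interior point of $T_1$, which cannot cross the side $\overline{v_1v_5}$), and symmetrically if only $\overline{v_3v_4}$ pierces $T_1$ then $\overline{v_1v_4}$ cannot pierce $T_2$.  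Thus your cases (i) and (iii) are vacuous, and only your case (ii) survives --- which is exactly the paper's two cases (they treat the two $\sigma$-related configurations separately rather than invoking the symmetry).  Incorporating this observation would spare you the extra bookkeeping you anticipate; otherwise your outline is correct and would reach the same conclusion.
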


\begin{proof}  Again we consider an $8$-stick embedding of $K_4$ as a graph on 6 vertices.  
	We let the degree $3$ vertices of $K_4$ be labeled $v_1$, $v_2$, $v_3$, $v_4$, the edge between $v_1$ and $v_2$ contain vertex $v_5$, and the edge between $v_2$
	and $v_3$ contain vertex $v_6$.
	
	If the embedding is non-paneled, then the triangles $\langle v_1,v_5,v_2\rangle$ and $\langle v_2,v_6,v_3\rangle$ must both be irreducible. Hence one of the edges $\overline{v_3v_4}$
	or $\overline{v_3v_6}$ must pierce the triangle $\langle v_1,v_5, v_2\rangle$, and one of the edges $\overline{v_1v_4}$ or $\overline{v_1v_5}$ must pierce the triangle $\langle v_2, v_6, v_3 \rangle$.  In each case, we may assume that, up to affine transformation, the edge is orthogonal to the triangle it pierces.  Thus if only $\overline{v_3v_4}$ pierces the triangle $\langle v_1,v_5, v_2\rangle$, then $\overline{v_1v_4}$  cannot pierce $\langle v_2,v_6,v_3\rangle$; and if $\overline{v_3v_6}$ pierces the triangle $\langle v_1,v_5, v_2\rangle$, then $\overline{v_1v_5}$ cannot pierce $\langle v_2,v_6,v_3\rangle$.
		\medskip
	
	\noindent \textbf{Case 1:} Only the edge $\overline{v_3v_4}$ pierces the triangle $\langle v_1,v_5, v_2\rangle$.
	
	\begin{figure}[h]
		
			\includegraphics[scale=.85]{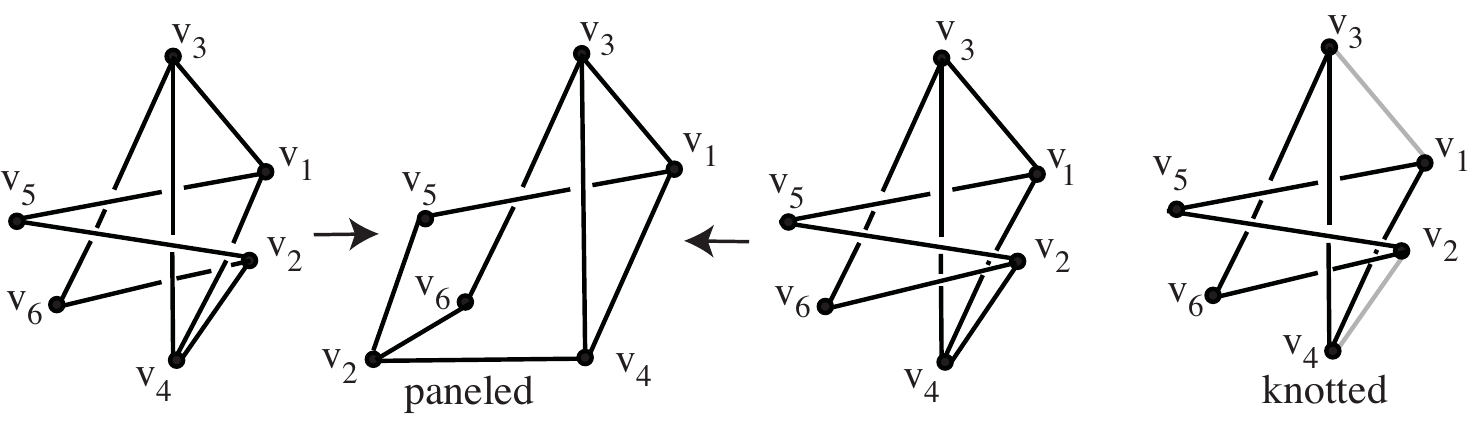}
			\caption{$\overline{v_1v_5}$ pierces $\langle v_2,v_6,v_3\rangle$, so $\overline{v_2v_6}$ must pass either behind, in front of, or between $\overline{v_1v_4}$ and $\overline{v_3v_4}$.}
			\label{fig:k4_7_2_1}
	\end{figure}

The location of $v_6$ must be such that $\langle v_2,v_6,v_3\rangle$ is pierced by $\overline{v_1v_5}$. Since $v_3$ is above the plane of $\langle v_1,v_5, v_2\rangle$, this means $v_6$ must be below the plane of $\langle v_1,v_5, v_2\rangle$.  Up to
	isotopy, this yields the configurations in Figure \ref{fig:k4_7_2_1}, where the different cases are determined by whether the edge $\overline{v_6v_2}$ is behind both $\overline{v_1v_4}$ and $\overline{v_3v_4}$ (illustrated in the first diagram), is in front of both $\overline{v_1v_4}$ and $\overline{v_3v_4}$ (illlustrated in the third diagram), or passes between $\overline{v_1v_4}$ and $\overline{v_3v_4}$ (illustrated in the fourth diagram).  The first and third configurations are isotopic to the second and paneled, and the fourth configuration contains a knot.

	\medskip

	\noindent \textbf{Case 2:} $\overline{v_3v_6}$ pierces the triangle $\langle v_1,v_5, v_2\rangle$.

In this case, the location of $v_4$ must be such that $\langle v_2,v_6,v_3\rangle$ is pierced by $\overline{v_1v_4}$.   Thus $\overline{v_2v_5}$ must pass either in front of, behind, or between $\overline{v_1v_4}$ and $\overline{v_3v_4}$.  If it is in front or behind both $\overline{v_1v_4}$ and $\overline{v_3v_4}$ (illustrated in the first and third diagrams in Figure~\ref{fig:k4_7_3_2}), then the configuration is paneled.  If it is between $\overline{v_1v_4}$ and $\overline{v_3v_4}$ (illustrated in the fourth diagram of Figure~\ref{fig:k4_7_3_2}), then the configuration contains a knot.  \end{proof}

	\begin{figure}[h]
			\includegraphics[scale=.85]{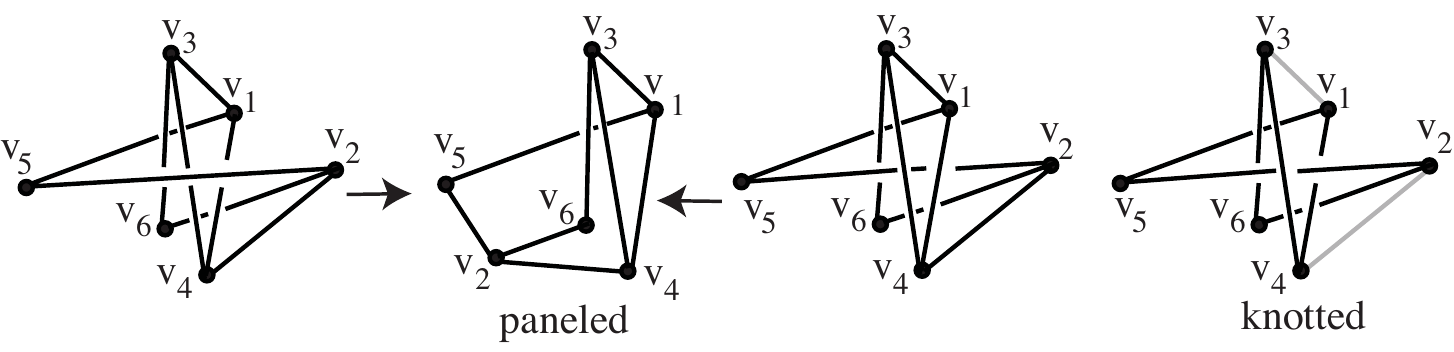}	
			\caption{$\overline{v_1v_4}$ pierces $\langle v_2,v_6,v_3\rangle$, so $\overline{v_2v_5}$ must pass either in front of, behind, or between $\overline{v_1v_4}$ and $\overline{v_3v_4}$.}	
			\label{fig:k4_7_3_2}
		\end{figure}

\begin{thm}\label{prop:k4_stick_number} There is a $9$-stick non-paneled knotless embedding of $K_4$, but no such embedding of $K_4$ exists with fewer than $9$ sticks.\end{thm}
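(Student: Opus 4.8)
The plan is to separate the statement into its two halves: the lower bound (no non-paneled knotless embedding with fewer than $9$ sticks) and realizability (such an embedding exists with exactly $9$ sticks).

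For the lower bound I would argue by cases on the stick count, invoking the lemmas already proved. Since $K_4$ has six edges, any stick embedding uses at least six sticks, and a six-stick embedding is linear, hence a tetrahedron and paneled; Lemma~\ref{nonpaneledK4} moreover rules out seven-stick embeddings (its proof handles the $7$-stick case directly). For eight sticks the key combinatorial observation is that expressing $8$ as a sum of six positive integers forces the vector of sticks-per-edge to be, up to permutation, either $(3,1,1,1,1,1)$ or $(2,2,1,1,1,1)$; and in $K_4$ any two edges are either adjacent or disjoint. Hence every eight-stick embedding of $K_4$ falls into exactly one of the three types treated in Lemmas~\ref{3Sticks}, \ref{K4Subgraph}, and \ref{Adjacent}, each of which concludes the embedding is paneled or contains a knot. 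Thus no embedding of $K_4$ with eight or fewer sticks is simultaneously non-paneled and knotless, so the non-paneled knotless stick number is at least $9$.

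For realizability I would exhibit one explicit nine-stick embedding and check the two required properties. A natural candidate uses the stick vector $(2,2,2,1,1,1)$: place the triangle $\langle v_1,v_2,v_3\rangle$ in a plane with one stick per edge, and route each of the three edges incident to $v_4$ as a two-stick path through an auxiliary degree-two vertex, with the auxiliary vertices chosen so that the three paths weave through the triangle in a ``ravelled'' pattern — non-planar, with no reducible bigon forced by a degree-two vertex. To see the embedding is knotless, observe that each of the seven cycles of $K_4$ is a polygon with at most five sticks in this picture, and inspect each one directly; such small polygonal cycles are unknots. To see it is non-paneled, select a specific cycle (say one of the $4$-cycles, or the boundary triangle $\langle v_1,v_2,v_3\rangle$) and show it bounds no disk with interior disjoint from the graph. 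The cleanest route is through the Robertson--Seymour--Thomas characterization, showing that the complement of an appropriate subgraph fails to have free fundamental group, or equivalently producing a loop in the graph complement whose linking behavior with the chosen cycle obstructs every spanning disk.

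The main obstacle: the lower-bound half is essentially bookkeeping once the three eight-stick lemmas are in hand, so the substantive work is the realizability half — producing an explicit nine-stick picture for which one can \emph{simultaneously} certify knotlessness (all seven cycles unknotted) and non-paneledness. The non-paneled verification is the delicate point, since one must rule out \emph{every} candidate spanning disk for the chosen cycle; this is where the RST free-fundamental-group criterion, or a careful wrapping/linking-number argument, does the real lifting. I would also double-check that the auxiliary-vertex coordinates genuinely yield a piecewise-linear diagram with exactly nine sticks and introduce no accidental crossings.
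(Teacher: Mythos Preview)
Your lower-bound argument matches the paper's exactly: the three $8$-stick lemmas plus Lemma~\ref{nonpaneledK4} exhaust the cases, and your combinatorial parsing of the stick vector $(3,1,\dots,1)$ versus $(2,2,1,\dots,1)$ with adjacent/disjoint subcases is the right bookkeeping.

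The realizability half, however, contains a concrete error and misses a much cleaner route. With your stick vector $(2,2,2,1,1,1)$ and the three $2$-stick edges all incident to $v_4$, the three Hamiltonian $4$-cycles each use two $1$-stick edges and two $2$-stick edges, hence have \emph{six} sticks, not at most five. Since the stick number of the trefoil is $6$, you cannot dismiss these cycles as automatically unknotted; each would require an explicit check. More seriously, you never actually produce an embedding --- you describe a scheme and defer both verifications, and your proposed non-paneledness certificate (free-fundamental-group failure or a linking obstruction) is exactly the delicate computation you would like to avoid.

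The paper sidesteps all of this by building on Huh and Oh's $8$-stick realization of Kinoshita's $\theta$-curve: add a single straight edge $\overline{v_3v_4}$ to that $\theta$-graph to obtain a $9$-stick $K_4$. Non-paneledness is then immediate, since the embedded $K_4$ contains a non-paneled subgraph (the Kinoshita $\theta$-curve itself). Knotlessness is almost free as well: every cycle that avoids the new edge was already a cycle of the knotless $\theta$-curve, so only the cycles through $\overline{v_3v_4}$ with at least six sticks (there are two) need to be inspected, and both are visibly unknotted. This approach trades your from-scratch construction for an off-the-shelf non-paneled knotless object, turning both verifications into one-line observations.
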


\begin{proof} It follows from Lemmas \ref{nonpaneledK4}, \ref{K4Subgraph}, \ref{3Sticks}, and \ref{Adjacent} that no embedding of $K_4$ with 8 or fewer sticks can be both non-paneled and knotless.

Huh and Oh \cite{huh09} created an $8$-stick embedding of a $\theta$-graph which is isotopic to Kinoshita's $\theta$-graph, and hence is knotless and non-paneled.   The drawing on the left in Figure~\ref{9stick} illustrates a $9$-stick embedding of $K_4$ which is obtained from Huh and Oh's $\theta$-graph by adding an edge joining vertices $v_3$ and $v_4$ (illustrated as a dotted segment).  

\begin{figure}[h]
\includegraphics[scale=.9]{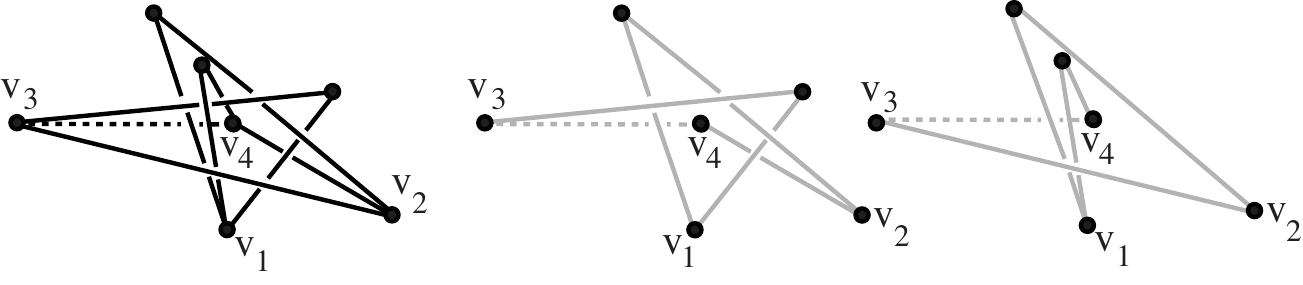}
\caption{A non-paneled knotless $9$-stick embedding of $K_4$.}
\label{9stick}
\end{figure}

Since this $K_4$ contains a non-paneled $\theta$-graph, it must be non-paneled.   On the right in Figure~\ref{9stick}, we illustrate the only cycles with at least $6$ sticks which were not in Huh and Oh's $\theta$-graph, both of which are unknotted.  No knotted cycle can have fewer than $6$ sticks.  All of the other cycles in this $K_4$ are also unknotted since they were contained in Huh and Oh's $\theta$-graph.\end{proof}

\section{Applications to metalloproteins}

Many metalloprotein structures contain one of the graphs $K_{3,3}$ or $K_5$ and hence are non-planar.  For example, nitrogenase is a complex metalloprotein containing a MoFe protein, which itself contains two M-clusters and two P-clusters.  Each M-cluster contains $K_{3,3}$, and if we include the protein backbone, each P-cluster also contains $K_{3,3}$.  Thus nitrogenase itself contains four separate $K_{3,3}$'s \cite{MBMB}.

To understand the topological complexity of such metalloproteins, it is not sufficient to know that the structure contains one or more copies of $K_5$ or $K_{3,3}$, we need to know how these subgraphs are embedded in $\mathbb{R}^3$.   However, looking at proteins as diverse as nitrogenase, cyclotides with a ``cysteine knot motif,'' and nerve growth factor we observe that  each of them contains a $K_{3,3}$ whose embedding resembles a M\"{o}bius strip as illustrated in Figure~\ref{MobiusForm}.  

\begin{figure}[h]
		\includegraphics[scale=.9]{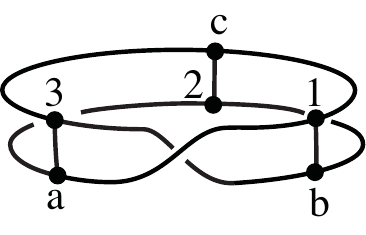}
		\caption{An embedding of $K_{3,3}$ is in {\it M\"{o}bius form} if it is isotopic to this embedding or its mirror image.}
		\label{MobiusForm}
	\end{figure}

\begin{defn} We say an embedding of $K_{3,3}$ in $\mathbb{R}^3$ is in \textit{M\"{o}bius form}, if it is isotopic to the embedding illustrated in Figure~\ref{MobiusForm} or its mirror image.\end{defn}

As an example, in Figure~\ref{Metallo} on the top left we illustrate part of a nitrogenase molecule that contains a $K_{3,3}$.  In order to better see the $K_{3,3}$ subgraph, we progressively remove more of the graph around it.  After that, we can easily isotope the $K_{3,3}$ to the form illustrated in Figure~\ref{MobiusForm}.  The question that we are interested in here is why the M\"{o}bius form of $K_{3,3}$ is so prevalent in metalloproteins that contain a $K_{3,3}$.

\begin{figure}[h]
		\includegraphics{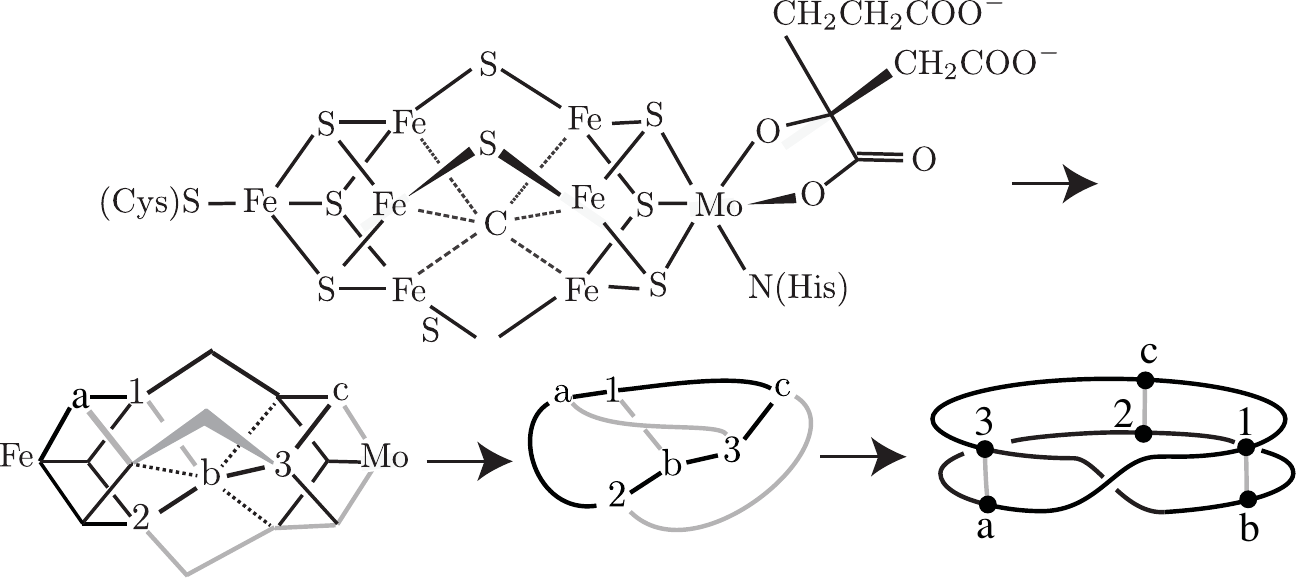}
		\caption{One of the $K_{3,3}$'s in the metalloprotein nitrogenase.}
		\label{Metallo}
	\end{figure}

 Note that while the M\"{o}bius form is knotless, not every embedding of $K_{3,3}$ which is knotless is necessarily in M\"{o}bius form.  We illustrate such an embedding in Figure~\ref{nonmobius}. To see that this embedding is knotless, observe that any knot would have to contain at least three crossings.  Hence a knot would necessarily contain two of the three edges incident to $v$.  But if we remove any one of the three edges containing $v$, the crossings between the remaining two can be removed.

\begin{figure}[h]
		\includegraphics[scale=.9]{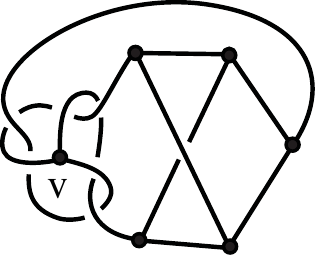}
		\caption{An embedding of $K_{3,3}$ which is not in M\"{o}bius form yet contains no knot.}
		\label{nonmobius}
	\end{figure}

\medskip

In order to shed some light on the question as to why the M\"{o}bius form of $K_{3,3}$ is prevalent in metalloproteins, we model embeddings of $K_{3,3}$ in a metalloprotein by random linear embeddings of $K_{3,3}$ in a cube and prove the following theorem.

\begin{thm}\label{thm:k33} The probability that a random linear embedding of $K_{3,3}$ in a cube is in M\"{o}bius form is $0.97380\pm 0.00003$.
\end{thm}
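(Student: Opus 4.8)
The plan is to prove the theorem by a Monte Carlo estimate, so the real content is an effective procedure that, given six points in general position in the cube, decides whether the resulting linear embedding of $K_{3,3}$ is in M\"{o}bius form. The $K_4$ results of Section~2 are exactly what make such a procedure available.

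The first and most important step is a classification: I claim that a knotless linear embedding of $K_{3,3}$ is either paneled or in M\"{o}bius form. The leverage is Lemma~\ref{K4Subgraph}. If $\Gamma$ is a knotless linear embedding of $K_{3,3}$ and $e=uv$ is any of its nine edges, then suppressing the two degree-$2$ vertices of $K_{3,3}-e$ produces a copy of $K_4$ in which $u$ and $v$ have been inserted into two disjoint edges; hence $\Gamma-e$ is an $8$-stick embedding of $K_4$ of exactly the kind treated in Lemma~\ref{K4Subgraph}. Since $\Gamma$ is knotless so is $\Gamma-e$, so by Lemmas~\ref{nonpaneledK4} and~\ref{K4Subgraph} the embedding $\Gamma-e$ is paneled. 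Thus \emph{every} single-edge deletion of $\Gamma$ is paneled; I would combine this strong constraint with the uniqueness up to homeomorphism of the paneled embedding of $K_{3,3}$ \cite{robertson95} and a case analysis of the ways $\Gamma$ can fail to be paneled to conclude that $\Gamma$ is paneled or in M\"{o}bius form. In particular, this is where knotless non-M\"{o}bius configurations such as the one in Figure~\ref{nonmobius} are shown not to be linearly realizable with nine sticks.

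Granting this classification, a linear embedding of $K_{3,3}$ is in M\"{o}bius form if and only if (i) none of its six Hamiltonian $6$-cycles is a trefoil, and (ii) it is not paneled. Condition (i) is finite and elementary to test: a cycle with fewer than six sticks is unknotted, every cycle of $K_{3,3}$ has four or six edges, and a linear hexagon is knotted only if it is a trefoil, which is decided from the orientation signs of the six points in a generic direction of projection. For condition (ii), when (i) holds the classification leaves only two isotopy classes --- the paneled one and the M\"{o}bius one --- so it suffices to evaluate any computable ambient-isotopy invariant of spatial graphs that takes different values on these two embeddings (for instance an invariant read off from the crossing data of a generic diagram, such as the Simon invariant or a suitable polynomial invariant), whose two possible values are tabulated once in advance.

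Finally I would sample $N$ independent uniform six-tuples of points in the cube, discard the measure-zero degenerate configurations, run the decision procedure on each, and report the empirical fraction $\hat p$ in M\"{o}bius form together with the Bernoulli confidence half-width $z\sqrt{\hat p(1-\hat p)/N}$; the stated uncertainty $\pm 0.00003$ fixes $N$ (on the order of $10^{8}$), and I would cross-check with an independent reimplementation and by confirming that the complementary mass $\approx 0.026$ is split between the knotted and paneled cases. The main obstacle is the classification in the second step: Lemma~\ref{K4Subgraph} gives very strong local information --- every single-edge deletion of a knotless linear $K_{3,3}$ must be paneled --- but turning this into a complete isotopy classification of such embeddings, and in particular ruling out any exotic knotless non-M\"{o}bius linear type, is the delicate part of the argument.
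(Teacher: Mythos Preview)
There is a genuine conceptual error: you treat ``paneled'' and ``M\"{o}bius form'' as two distinct isotopy classes of $K_{3,3}$, and your decision procedure requires (ii) that the embedding be \emph{not} paneled in order to be counted as M\"{o}bius. In fact the M\"{o}bius form \emph{is} the paneled embedding of $K_{3,3}$; Robertson--Seymour--Thomas show that $K_{3,3}$ has a unique paneled embedding up to homeomorphism, and the picture in Figure~\ref{MobiusForm} is exactly that embedding. So your condition (ii) is the negation of what you want, and your proposed classification ``paneled or M\"{o}bius'' is not a dichotomy but a single class. With your procedure as written you would count every knotless sample as non-M\"{o}bius and report a probability near $0$ rather than $0.97$.

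Your first step---using Lemma~\ref{K4Subgraph} to see that every single-edge deletion of a knotless linear $K_{3,3}$ is a paneled $K_4$---is correct and is exactly what the paper does. But where you then propose a ``case analysis of the ways $\Gamma$ can fail to be paneled,'' the paper instead invokes the theorem of Huh and Lee that every linear embedding of a graph on at most six vertices all of degree $\ge 3$ is free. Since paneled $\Leftrightarrow$ totally free, the edge-deletion argument shows every proper subgraph of $\Gamma$ is free, and Huh--Lee shows $\Gamma$ itself is free; hence $\Gamma$ is totally free, hence paneled, hence in M\"{o}bius form. No case analysis is needed, and the conclusion is stronger than yours: knotless linear $K_{3,3}$ is \emph{always} in M\"{o}bius form. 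Finally, the paper does not run a Monte Carlo at the $K_{3,3}$ level at all; it computes $P(\text{M\"{o}bius}) = 1 - P(\text{knotted})$ analytically by extending a random linear $K_{3,3}$ to a random linear $K_6$, using the known fact that a linear $K_6$ contains at most one (trefoil) knot, and the previously computed probability that a random linear $K_6$ in a cube has three Hopf links.
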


As the first step in proving this theorem, we will show that any knotless linear embedding of $K_{3,3}$ is in M\"{o}bius form.  Then we randomly distribute $6$ points in a cube and connect all pairs of points with straight segments to create a random linear embedding of $K_6$.  Finally, we compute the probability that such a $K_6$ contains a knot and use it to compute the probability that a random linear embedding of $K_{3,3}$ is in M\"{o}bius form.

For the first step, we will use the definition and results below.

\begin{defn} An embedded graph $\Gamma$ in $\mathbb{R}^3$ is said to be \textit{free} if the fundamental group of the complement of $\Gamma$ is free, and \textit{totally free} if every subgraph of $\Gamma$ is free.
\end{defn}

\begin{lem*} [Robertson, Seymour, and Thomas  \cite{robertson95}]  An embedded graph is paneled if and only if it is totally free.  
\end{lem*}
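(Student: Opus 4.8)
The plan is to prove Theorem~\ref{thm:k33} in two stages: a topological classification reducing ``M\"obius form'' to ``knotless,'' followed by a Monte Carlo estimate of the probability that a random linear $K_{3,3}$ is knotless. The reduction rests on two observations. First, since $K_{3,3}$ is bipartite with parts of size three, every cycle has length $4$ or $6$; a $4$-cycle (four sticks) can never be knotted, and the only knot realizable with six sticks is the trefoil. Hence a linear $K_{3,3}$ is knotted if and only if one of its six Hamiltonian hexagons is a trefoil. Second --- and this is the step that uses our work on $K_4$ --- I claim that a \emph{linear} embedding of $K_{3,3}$ is in M\"obius form if and only if it is knotless. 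Granting this, the theorem becomes the purely probabilistic statement that a random linear $K_{3,3}$ in the cube contains no knotted Hamiltonian hexagon with probability $0.97380\pm0.00003$.

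First I would establish the classification lemma: every knotless linear embedding of $K_{3,3}$ is isotopic to the M\"obius form of Figure~\ref{MobiusForm} or its mirror image. This is where I expect the main difficulty. The idea is to exploit that a linear embedding is determined up to isotopy by the relative positions (order type) of its six vertices, so that there are only finitely many combinatorial types to analyze. Using the piercing/irreducibility bookkeeping developed in Section~2 together with Theorem~\ref{prop:k4_stick_number} and the Robertson--Seymour--Thomas criterion \cite{robertson95} that paneled equals totally free, I would enumerate these types and show that each is either in M\"obius form or else contains a trefoil among its Hamiltonian hexagons. The delicate point is that knotless does \emph{not} imply M\"obius form for arbitrary embeddings (cf.\ Figure~\ref{nonmobius}), so the argument must genuinely use linearity to rule out the extra knotless type present in the general piecewise-linear setting.

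With the classification in hand, I would set up the probability as a computation on random linear copies of $K_6$. Distribute six points independently and uniformly in the cube and join all pairs by sticks. The resulting $K_6$ has exactly $(6-1)!/2 = 60$ Hamiltonian hexagons, and its $K_{3,3}$ subgraphs correspond to the $\binom{6}{3}/2 = 10$ bipartitions of the vertex set into two triples. A Hamiltonian hexagon of $K_6$, being a $6$-cycle, is properly $2$-colorable in a unique way with color classes of size three; this $2$-coloring is exactly a bipartition, so each of the $60$ hexagons is a Hamiltonian cycle of precisely one of the ten $K_{3,3}$ subgraphs, and each subgraph carries $60/10 = 6$ of them. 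Consequently a $K_{3,3}$ subgraph is knotted precisely when it contains a trefoil hexagon, and a random $K_6$ is knotless precisely when all ten of its $K_{3,3}$ subgraphs are. Because relabeling the six uniform points is measure preserving, every one of the ten subgraphs is knotted with the same probability $p$, and by the classification $P(\mbox{M\"obius form}) = 1-p$.

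Finally I would estimate $p$ by Monte Carlo. For each random six-point cloud I would test each of its $60$ hexagons for knottedness; since a six-stick knot is necessarily a trefoil, this can be detected from a generic projection by a crossing/determinant check. Pooling over the ten subgraphs of each sample --- which is legitimate by the symmetry above and makes efficient use of each point cloud --- gives an unbiased estimator of $p$, hence of $1-p$, whose standard error yields the stated tolerance $\pm 0.00003$. As an internal consistency check I would also record the frequency with which the whole $K_6$ contains a knot, i.e.\ the frequency that at least one of the ten subgraphs is knotted; this is the probability that $K_6$ contains a knot referred to in the outline, and it must be consistent with the per-sample distribution of knotted subgraphs. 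The routine but nontrivial engineering is ensuring the knot-detection test is exact for six-stick hexagons and that the sample size drives the standard error below $3\times10^{-5}$.
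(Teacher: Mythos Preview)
Your proposal addresses the wrong statement. The lemma in question is the Robertson--Seymour--Thomas result that an embedded graph is paneled if and only if it is totally free; the paper does not prove this at all but simply quotes it from \cite{robertson95} as a black box. What you have written is a proof plan for Theorem~\ref{thm:k33} (and, implicitly, Theorem~\ref{knotless}), which are separate results that merely \emph{use} the RST lemma as an input.

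Even regarded as an attempt at Theorem~\ref{thm:k33}, your plan diverges from the paper in both stages. For the classification step (Theorem~\ref{knotless}), the paper does not enumerate order types: deleting any edge of a linear $K_{3,3}$ gives a knotless $8$-stick $K_4$ of the type treated in Lemma~\ref{K4Subgraph}, hence paneled, hence totally free by RST; every proper subgraph of $K_{3,3}$ sits inside such a $K_4$ and is therefore free, and Huh--Lee's theorem supplies freeness of $K_{3,3}$ itself, so $K_{3,3}$ is totally free, hence paneled, and RST uniqueness forces M\"obius form. For the probability step, the paper does not run a fresh Monte Carlo: it invokes the structure theorem (Hughes, Huh--Jeon, Nikkuni) that a linear $K_6$ has at most one knotted cycle, combines this with the numerical estimate from \cite{flapan16} that a random linear $K_6$ in a cube is knotted with probability $(45q-1)/2$, and observes that exactly one of the ten disjoint $3$-cycle pairs leaves a given hexagon intact, giving $(45q-1)/20$ directly. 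Your simulation approach would reproduce the number but is redundant given these existing inputs.
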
   
	
	\begin{thm*} [Robertson, Seymour, and Thomas  \cite{robertson95}] Up to homeomorphism, $K_{3,3}$ and $K_5$ each have a unique paneled embedding.
\end{thm*}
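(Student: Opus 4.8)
The plan is to prove uniqueness by converting the given paneled embedding into an embedded closed surface that carries the graph as its $1$-skeleton, and then showing that this surface sits in $S^3 = \mathbb{R}^3 \cup \{\infty\}$ in the standard, unknotted way. Since both $K_{3,3}$ and $K_5$ have genus $1$, the natural model is a torus, and once the surface is pinned down as the standard Heegaard torus, the isotopy class of the $1$-skeleton is essentially forced. Throughout I would use the Robertson--Seymour--Thomas lemma quoted just above to pass freely between the hypothesis ``paneled'' and its equivalent ``totally free,'' i.e. the complement of every subgraph has free fundamental group; the freeness is exactly the leverage one needs against $3$-manifold topology.

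First I would produce the panels. Each of $K_{3,3}$ and $K_5$ admits a cellular embedding in the torus whose faces are bounded by cycles of the graph (three hexagonal faces for $K_{3,3}$; five faces for $K_5$). In a paneled embedding every such face-cycle bounds a disk whose interior misses the graph, so choosing one disk per face yields a map of the abstract torus into $S^3$ that restricts to the given embedding on the graph. At this stage the image is only a \emph{singular} torus: the paneled condition controls each disk relative to the graph but says nothing about how the disks meet one another.

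The crux, and the step I expect to carry the real weight, is to replace these panels by pairwise disjoint, embedded disks so that the assembled torus $\Sigma$ is genuinely embedded. Working in the complement of the graph, I would put the panels in general position and remove their self- and mutual intersections by an innermost-circle cut-and-paste induction: an innermost intersection circle bounds a subdisk on one panel, and because the relevant fundamental groups are free, the sphere theorem, loop theorem, and Dehn's lemma let me surger these circles away without altering the boundary cycle that each panel is required to span, and without introducing knotting. Making this induction terminate, and checking that each surgered disk still bounds the prescribed cycle, is precisely where the total-freeness hypothesis (rather than mere knotlessness) is indispensable. The output is an embedded torus $\Sigma \subset S^3$ whose $1$-skeleton is the original embedded graph.

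Finally I would argue that $\Sigma$ is standard and deduce uniqueness. Since $S^3 \setminus \Sigma$ inherits free fundamental group from the complement of the graph, $\Sigma$ cannot be a knotted torus---a knotted torus has a side whose complement contains a $\mathbb{Z} \oplus \mathbb{Z}$ and is therefore not free---so $\Sigma$ is the standard Heegaard torus, unique up to isotopy of $S^3$. Two paneled embeddings thus give isotopic standard tori, and it remains to match the graphs inside them: I would invoke the uniqueness (up to homeomorphism of the torus) of the genus-$1$ cellular embedding of these two highly symmetric $3$-connected graphs, and check that the realizing surface homeomorphism extends over $S^3$ (allowing orientation reversal, which is harmless since we work up to homeomorphism). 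Composing these extensions exhibits a homeomorphism of $S^3$ carrying one paneled embedding onto the other. The disentangling step of the third paragraph is the main obstacle; the identification in the second paragraph and the standardness argument here are comparatively formal once the embedded torus is in hand, though verifying the torus-level uniqueness and extendability deserves care.
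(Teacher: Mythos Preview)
The paper does not prove this theorem: it is quoted from Robertson, Seymour, and Thomas \cite{robertson95} and invoked without argument in the proof of Theorem~\ref{knotless}. There is therefore no proof in the paper against which to compare your attempt.

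As for the attempt itself, the strategy is reasonable in outline but several steps are genuinely incomplete. When you glue panels along the $1$-skeleton of a torus embedding, adjacent panels share boundary edges, so in general position their intersections include \emph{arcs} with endpoints on those shared edges, not only circles; an innermost-circle induction does not remove these, and eliminating the arcs is where the real work lies. For $K_5$ you must also verify that there is a torus embedding whose face boundaries are simple cycles (paneledness supplies disks only for cycles, not for closed walks with repeated vertices), and that the genus-$1$ cellular embedding is unique up to surface homeomorphism---routine for $K_{3,3}$, less so for $K_5$. Finally, not every self-homeomorphism of the standard Heegaard torus extends over $S^3$; only those in a finite subgroup of the mapping class group do, so the extension step needs an explicit check rather than an assertion.
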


\begin{thm*}[Huh and Lee \cite{huh15}] Every linear embedding of a graph with no more than $6$ vertices all of degree at least $3$ is free.
\end{thm*}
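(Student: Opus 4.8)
The plan is to convert the probabilistic statement into a purely combinatorial count over random linear $K_6$'s, after first replacing ``M\"{o}bius form'' by ``knotless.'' Recall that the M\"{o}bius form is knotless, and that the first step of this section establishes the converse for \emph{linear} embeddings: every knotless linear $K_{3,3}$ is in M\"{o}bius form. Hence, among linear embeddings of $K_{3,3}$, being in M\"{o}bius form and being knotless describe the same configurations, apart from a measure-zero family of degenerate (coplanar or otherwise non-generic) point placements that contributes nothing to the probability. So I would first reduce the theorem to the statement that the probability of M\"{o}bius form equals $1-p$, where $p$ is the probability that a random linear $K_{3,3}$ contains a knot, and then devote the rest of the argument to computing $p$.

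Next I would pin down which cycles can be knotted. Since the stick number of every nontrivial knot is at least $6$, a straight-edge quadrilateral is always unknotted, so the only cycles of $K_{3,3}$ that can carry a knot are its six Hamiltonian (hexagonal) cycles, and any knot that appears is a trefoil. To exploit the ambient symmetry I would realize the random $K_{3,3}$ inside a random linear $K_6$: drop six uniform points in the cube and join all pairs by segments. Every Hamiltonian $6$-cycle of $K_6$ carries a unique alternating $2$-coloring of its vertices, hence determines a unique unordered bipartition of the six vertices into two triples; conversely each of the $\binom{6}{3}/2=10$ such bipartitions carries exactly the $\frac{3!\,2!}{2}=6$ Hamiltonian cycles of the corresponding $K_{3,3}$ subgraph. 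As $10\times 6=60$ is the total number of Hamiltonian cycles of $K_6$, this is a clean partition, so the knotted cycles of the whole $K_6$ are distributed with no overlap among its ten embedded $K_{3,3}$'s. By symmetry of the uniform measure each bipartition is knotted with the same probability $p$.

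With this structure in place, computing $p$ becomes a Monte Carlo estimate that reuses each random $K_6$ ten times. For each sampled point set, and each of the $10$ bipartitions, I would test the six hexagons for knottedness by forming a generic planar projection and computing an invariant that separates the unknot from the trefoil, for instance the determinant $|\Delta(-1)|$, which equals $1$ for the unknot and $3$ for the trefoil. Letting $X\in\{0,1,\dots,10\}$ be the number of bipartitions that acquire a knotted hexagon, linearity of expectation gives $E[X]=10p$, so I would estimate $p$ by $\bar X/10$ over $N$ trials and report $1-\bar X/10=0.97380$ as the estimated probability of M\"{o}bius form. The stated uncertainty $\pm 0.00003$ is the standard error $\sqrt{\mathrm{Var}(X)/(100\,N)}$ of this clustered estimator, with $N$ taken large enough to reach that precision.

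The main obstacle is the topological input behind the first reduction, namely that \emph{every} knotless linear $K_{3,3}$ is in M\"{o}bius form, which is exactly what lets me trade ``M\"{o}bius form'' for ``knotless''; the embedding in Figure~\ref{nonmobius} shows this fails without linearity. This is the genuinely hard step, and I would establish it from Huh and Lee's theorem that a linear $K_{3,3}$ is free together with the Robertson--Seymour--Thomas results identifying paneled embeddings with totally free ones and giving the uniqueness of the paneled $K_{3,3}$. By comparison the combinatorial partition and the simulation are routine; the only residual care is to use genuinely generic projections so that the measure-zero degenerate placements never corrupt the invariant computation or the count.
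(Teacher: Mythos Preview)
Your proposal does not address the stated theorem at all. The statement is Huh and Lee's result that every linear embedding of a graph with at most six vertices, all of degree at least three, is free. This is a cited theorem from \cite{huh15}; the paper offers no proof of it, merely invokes it. Your write-up is instead a sketch of a proof of Theorem~\ref{thm:k33}, the probabilistic claim that a random linear $K_{3,3}$ in a cube is in M\"{o}bius form with probability $0.97380\pm 0.00003$. That is a different statement, and nothing in your proposal bears on why the complement of a linear embedding of such a graph has free fundamental group.

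Even viewed as a proof of Theorem~\ref{thm:k33}, your approach diverges from the paper's. You propose a direct Monte Carlo estimate: sample random $K_6$'s, test each of the sixty Hamiltonian hexagons for knottedness via a diagram invariant, and aggregate over the ten bipartitions. The paper instead leverages the structural fact (from \cite{Hughes,huh07,Nikkuni}) that a linear $K_6$ contains at most one trefoil-knotted $6$-cycle, and this occurs precisely when it contains three Hopf links; then it imports the numerical probability of the three-Hopf-link event from \cite{flapan16} and divides by $10$ to pass to $K_{3,3}$. Your route would work in principle but re-derives by simulation a number the paper obtains from an existing computation. In any case, this is all beside the point: the statement you were asked to address is the Huh--Lee freeness theorem, not the probabilistic theorem.
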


We now prove the following.

\begin{thm} \label{knotless} Every knotless linear embedding of $K_{3,3}$ is isotopic to an embedding in M\"{o}bius form.\end{thm}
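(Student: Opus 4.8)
The plan is to prove the apparently stronger statement that every knotless linear embedding $G$ of $K_{3,3}$ is in fact \emph{paneled}, and then to conclude by invoking the Robertson--Seymour--Thomas uniqueness of the paneled embedding of $K_{3,3}$ together with the fact that the embedding of Figure~\ref{MobiusForm} is paneled. Throughout, fix the bipartition $A=\{a_1,a_2,a_3\}$, $B=\{b_1,b_2,b_3\}$ of $K_{3,3}$.

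The crucial observation is that for every edge $e=a_ib_j$ of $K_{3,3}$, the graph $K_{3,3}-e$ is homeomorphic to $K_4$. Indeed, its only degree-$2$ vertices are $a_i$ and $b_j$; suppressing them turns $K_{3,3}-e$ into $K_4$ on the remaining four vertices, and the two edges of this $K_4$ arising from the suppressed $2$-valent paths form a \emph{disjoint} pair: they are $\overline{b_k a_i b_l}$ with $\{k,l\}=\{1,2,3\}\setminus\{j\}$ and $\overline{a_k b_j a_l}$ with $\{k,l\}=\{1,2,3\}\setminus\{i\}$, which share no vertex precisely because $a_ib_j$ has been removed. Hence $G-e$ is exactly an $8$-stick embedding of $K_4$ of the kind treated in Lemma~\ref{K4Subgraph} --- two disjoint edges each consisting of two sticks. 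Removing an edge cannot introduce a knot, so $G-e$ is knotless; therefore Lemma~\ref{K4Subgraph} forces $G-e$ to be paneled.

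Next I would assemble these local facts. Every proper subgraph of $K_{3,3}$ lies inside $K_{3,3}-e$ for some edge $e$, and a paneled embedding is totally free (by the Robertson--Seymour--Thomas characterization \cite{robertson95}), so every subgraph of $G-e$ --- in particular every proper subgraph of $G$ --- has free complement. Meanwhile $K_{3,3}$ has six vertices, all of degree three, so by the Huh--Lee theorem \cite{huh15} the linear embedding $G$ itself has free complement. Consequently $G$ is totally free, hence paneled. By the uniqueness up to homeomorphism of the paneled embedding of $K_{3,3}$ \cite{robertson95}, $G$ is homeomorphic to the embedding in Figure~\ref{MobiusForm}; and since a homeomorphism of pairs is realized by an ambient isotopy when orientation-preserving, and by an isotopy together with a reflection otherwise, $G$ is isotopic to the embedding of Figure~\ref{MobiusForm} or to its mirror image, i.e.\ $G$ is in M\"obius form.

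The main obstacle I anticipate is the input needed at the very end: one must verify that the embedding drawn in Figure~\ref{MobiusForm} really is paneled --- that is, that every cycle of this M\"obius ladder, not only the three evident quadrilateral faces but also the hexagonal cycles (such as the ``rim'' cycle), bounds a disk with interior disjoint from the graph --- and producing the disks for the hexagons takes some care. A smaller point that must be checked is the parenthetical claim above that the two suppressed edges of $K_{3,3}-e$ are disjoint rather than adjacent, since this is what makes Lemma~\ref{K4Subgraph} (and not Lemma~\ref{Adjacent} or Lemma~\ref{3Sticks}) the applicable one.
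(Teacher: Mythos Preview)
Your proposal is correct and follows essentially the same route as the paper: delete an edge to obtain a knotless $8$-stick $K_4$ with two disjoint $2$-stick edges, apply Lemma~\ref{K4Subgraph} to get paneledness, use Robertson--Seymour--Thomas to convert this into total freeness of every proper subgraph, invoke Huh--Lee for freeness of the whole graph, and conclude via the uniqueness of the paneled $K_{3,3}$. Your write-up is in fact more explicit than the paper's about why the two subdivided edges of $K_{3,3}-e$ are disjoint and about the passage from ``unique up to homeomorphism'' to ``isotopic to the M\"obius form or its mirror image,'' and the two checkpoints you flag (paneledness of Figure~\ref{MobiusForm} and disjointness of the suppressed edges) are exactly the small verifications the paper leaves to the reader.
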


\begin{proof} Fix a knotless linear embedding $\Gamma$ of $K_{3,3}$ in $\mathbb{R}^3$.  Observe that if we delete a single edge of $\Gamma$ we obtain a knotless $8$-stick embedding of $K_4$ with two disjoint edges each consisting of two sticks (see Figure~\ref{subgraphK4}).  It now follows from Lemma~\ref{K4Subgraph} that such a $K_4$ subgraph must be paneled.

\begin{figure}[h]
		\includegraphics[scale=.9]{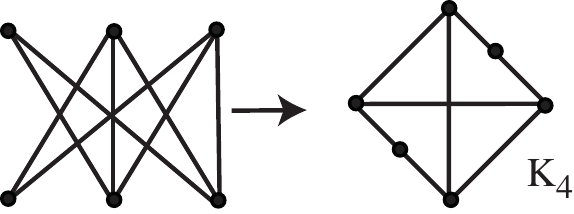}
		\caption{If we delete a single edge of $K_{3,3}$ we obtain a $K_4$.}
		\label{subgraphK4}
	\end{figure}
	
	Thus by the above lemma of Robertson, Seymour, and Thomas \cite{robertson95}, every $K_4$ subgraph of $\Gamma$ is totally free.  Since every proper subgraph of $\Gamma$ is obtained by removing at least one edge, such a subgraph  must be contained in a $K_4$ subgraph, and hence must be free.  Also, we know from Huh and Lee's Theorem \cite{huh15} that $\Gamma$ itself is free.  Thus $\Gamma$ is totally free.  
	
	By using Robertson, Seymour, and Thomas's Lemma \cite{robertson95} again we see that $\Gamma$ is paneled.  Finally, observe that the form of $K_{3,3}$ in Figure~\ref{MobiusForm} is paneled, and hence by Robertson, Seymour, and Thomas's Uniqueness Theorem \cite{robertson95}, $\Gamma$ must be in M\"{o}bius form.\end{proof}

We are now ready to prove Theorem \ref{thm:k33}, showing that the M\"{o}bius form of $K_{3,3}$ is overwhelmingly the most common  among random linear embeddings.

\begin{proof}[Proof of Theorem \ref{thm:k33}] Hughes \cite{Hughes}, Huh and Jeon \cite{huh07}, and Nikkuni \cite{Nikkuni} independently proved
	that a linear embedding of $K_6$ contains at most one knot (which is a trefoil knotted $6$-cycle), and the embedding contains such a knot if and only if it contains
	three Hopf links.  It was shown in \cite{flapan16} that the probability that a random linear embedding
	of $K_6$ in a cube has exactly three Hopf links is $\frac{45q-1}{2}$, where
	$q = 0.033867 \pm 0.000013$.
	
Now we randomly distribute $6$ vertices in a cube and add linear segments between each pair of vertices to obtain a random linear embedding of $K_6$.   We then remove a pair of disjoint $3$-cycles to get a random linear embedding of $K_{3,3}$. The only way this embedding could contain a knot is if the embedding of $K_6$ contained a knot and the pair of disjoint $3$-cycles that were removed left the knot intact.  There are $10$ distinct pairs of disjoint $3$-cycles in $K_6$, only one of which shares no edges with a given $6$-cycle.    It follows that the probability that $K_{3,3}$ is knotted is $\frac{45q-1}{20}=0.02620 \pm 0.00003$.  By using Theorem~\ref{knotless} the result follows.
\end{proof}

\section{Stick number for non-paneled knotless embeddings of $K_5$}

We have seen that the non-paneled knotless stick
number of $K_{4}$ is 9.  Thus, the remaining case of non-paneled knotless stick embeddings of $K_n$, is when $n=5$.  In this section, we show that the number of sticks in such an embedding of $K_5$ is at least 12 and at most 13.

\begin{thm}\label{K5_paneled}
There are precisely two linear embeddings of $K_5$ in general position up to affine transformation.  Futhermore, all linear embeddings of $K_5$ are paneled and ambient isotopic.
\end{thm}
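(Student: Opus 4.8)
The plan is to classify the underlying $5$-point configurations, verify the paneled property for each, and then invoke the Robertson--Seymour--Thomas uniqueness theorem to merge the two configurations into a single ambient isotopy class.

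A linear embedding of $K_5$ is nothing more than a choice of $5$ points in $\mathbb{R}^3$ together with the $\binom{5}{2}=10$ segments joining them, so the first step is a purely combinatorial analysis of such point sets. Assuming general position (no four points coplanar), the convex hull is a $3$-dimensional polytope with either $4$ or $5$ vertices. If it has $4$ vertices, the hull is a tetrahedron and the fifth point lies in its interior. If it has $5$ vertices, the hull is a simplicial $3$-polytope on $5$ vertices, and the only such combinatorial type is the triangular bipyramid: three of the points form a triangular ``equator'' and the segment between the other two ``apex'' vertices pierces that triangle. One then checks that each possibility gives the configuration up to affine transformation --- equivalently, that there are exactly two order types (Radon partition types $\{1,4\}$ and $\{2,3\}$) of $5$ points in general position in $\mathbb{R}^3$. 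This yields the count of two.

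Next I would verify that each of the two embeddings is paneled. Observe first that every cycle of $K_5$ uses at most $5$ edges, so every linear $K_5$ is knotless because a knotted polygon requires at least $6$ sticks, and $K_5$ contains no two disjoint cycles, so there are no linked pairs either. To obtain the full paneled property I would establish total freeness and apply the Robertson--Seymour--Thomas lemma: by Huh and Lee's theorem the embeddings of $K_5$ and of $K_5$ with one edge deleted are free, since all vertices still have degree at least $3$; every other proper subgraph is either contained in a $K_4$ subgraph --- hence inside a linearly embedded tetrahedron, which is paneled and therefore totally free --- or is obtained from such a subgraph by attaching an arc whose removal does not change the fundamental group of the complement (pendant edges, and degree-$2$ vertices whose incident sticks form a short arc). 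A more hands-on alternative, probably cleaner to present with figures, is to exhibit for each of the two configurations an embedded disk spanning every triangle, quadrilateral, and pentagon of $K_5$; this is a finite case check.

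Finally, in the bipyramid configuration one can push one apex through the equatorial triangle to the side of the other apex, an ambient isotopy that converts it into the tetrahedron-plus-interior-point configuration; hence the two embeddings are ambient isotopic, and since $K_5$ has a unique paneled embedding up to homeomorphism, both are ambient isotopic to the standard paneled $K_5$. I expect the paneled step to be the main obstacle --- either the bookkeeping needed to confirm that \emph{every} subgraph of a linear $K_5$, including those with degree-$2$ vertices, has free complement, or, on the concrete route, the case analysis producing a spanning disk for each cycle in both configurations. The convex-hull classification and the concluding isotopy statement are comparatively routine.
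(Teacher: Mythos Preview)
Your plan is sound and arrives at the same conclusions, but the argument for the paneled and isotopy claims differs from the paper's. For the classification, the paper enumerates the positions of $v_5$ relative to the four face half-spaces of the tetrahedron $\langle v_1,v_2,v_3,v_4\rangle$, obtaining four pictures that collapse pairwise to your two Radon types $\{1,4\}$ and $\{2,3\}$; this is just a rephrasing of your convex-hull argument. Where the approaches diverge is afterward: the paper simply asserts, by reference to the figures, that all four pictures are paneled and ambient isotopic, with no further justification. You instead propose a structural route through total freeness (Huh--Lee for the subgraphs with minimum degree $\ge 3$, then the Robertson--Seymour--Thomas lemma), and you invoke RST uniqueness for the isotopy. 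Your route is more principled but, as you rightly anticipate, the degree-$2$ subgraphs are where the bookkeeping lies: for instance, $K_5$ minus two adjacent edges is not contained in any $K_4$ subgraph, so the ``contained in a tetrahedron'' clause needs the arc-attachment step you sketch, and that step deserves a sentence of justification. Your hands-on alternative---exhibiting a spanning disk for each cycle in the two configurations---is essentially what the paper leaves to the reader, and is the quicker way to close the argument if you want to match the paper's level of detail.
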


\begin{proof} We begin with vertices $v_1$, $v_2$, $v_3$, $v_4$ in general position in $\mathbb{R}^3$.  The solid tetrahedron these vertices determine can be thought of as the intersection of four half-spaces, each defined by the plane containing three of the four vertices. For each such plane, we call the half-space that contains the tetrahedron the {\it inside} half-space, and the half-space that does not contain the tetrahedron the {\it outside} half-space.

Vertex $v_5$ cannot be in the plane of any three of the existing vertices, since the vertices are in general position. If vertex $v_5$ is inside the tetrahedron $\langle v_1,v_2,v_3,v_4\rangle$, then it is in the inside half-space of the planes determined by all of the faces (see Figure~\ref{K5}a). If vertex $v_5$ is outside the half-space of the face $\langle v_2,v_3,v_4\rangle$ and inside the half-space of the three other faces, then the five vertices are in the configuration of two tetrahedra glued along the face $\langle v_2,v_3,v_4\rangle$ together with the edge $\overline{v_1v_5}$ (see Figure~\ref{K5}b). If vertex $v_5$ is outside the half-spaces of the faces $\langle v_1,v_2,v_3\rangle$ and $\langle v_1,v_2,v_4\rangle$ and inside the half-spaces of the faces $\langle v_1,v_3,v_4\rangle$ and $\langle v_2,v_3,v_4\rangle$, then the five vertices are in the configuration of two tetrahedra glued along the face $\langle v_3,v_4,v_5\rangle$ together with the edge $\overline{v_1v_2}$ (see Figure~\ref{K5}c, where $v_5$ is above $\langle v_1,v_2,v_4\rangle$ and in front of $\langle v_1,v_2,v_3\rangle$). If vertex $v_5$ is outside the half-spaces of the faces $\langle v_1,v_2,v_3\rangle$, $\langle v_1,v_2,v_4\rangle$, and $\langle v_1,v_3,v_4\rangle$ and inside the half-space of the face $\langle v_2,v_3,v_4\rangle$, then vertex $v_5$ is the point of a cone about vertex $v_1$. Thus vertex $v_1$ lies inside tetrahedron $\langle v_2,v_3,v_4,v_5\rangle$ (see Figure~\ref{K5}d).   The case where vertex $v_5$ lies on the outside half-space of all four faces is impossible. 

 \begin{figure}[h]
\includegraphics[scale=.9]{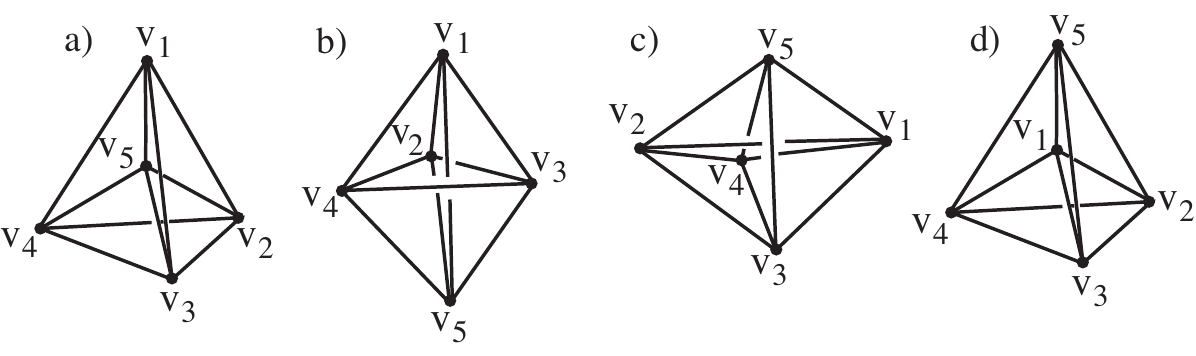}
\caption{These are all the linear embeddings of $K_5$ up to affine transformation.}
\label{K5}
\end{figure}

Observe that Figures~\ref{K5}(a) and (d) are equivalent by affine transformation, and Figures~\ref{K5}(b) and (c) are equivalent by affine transformation.  Also, all of the embeddings in Figure~\ref{K5} are paneled and ambient isotopic.
\end{proof}


 Note that a linear embedding of $K_5$ has $10$ sticks.  We show below that there are no knotless non-paneled $11$-stick embeddings of $K_5$.  
 
\begin{thm}\label{11stickK5}  Every $11$-stick embedding of $K_5$ is either paneled or contains a knot.
\end{thm}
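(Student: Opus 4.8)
The plan is to mimic the structure of the $K_4$ argument: reduce to a small number of combinatorial configurations by exploiting reducible triangles, then analyze each remaining configuration. An $11$-stick embedding of $K_5$ has $10$ edges, so exactly one edge is subdivided into two sticks; think of this as a linear embedding of a graph on $6$ vertices with one degree-$2$ vertex $v$ adjacent to $v_1$ and $v_2$. As in the preliminary discussion, if the embedding is non-paneled with a minimum number of sticks then the triangle $\langle v_1, v, v_2\rangle$ must be irreducible, for otherwise we could isotope away $v$ to obtain a $10$-stick (linear) embedding of $K_5$, which is paneled by Theorem~\ref{K5_paneled}. So some edge $e$ of the embedding must pierce the triangle $\langle v_1, v, v_2\rangle$; up to an affine transformation we may take $e$ orthogonal to that triangle, which we place in a horizontal plane.

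First I would enumerate which edges $e$ can be the piercing edge. The edges available are those of $K_5$ not incident to $v$ within the subdivided edge; up to the symmetry of $K_5$ fixing the pair $\{v_1,v_2\}$ (there is a $\mathbb{Z}/2 \times \mathbb{Z}/2$ acting: swapping $v_1 \leftrightarrow v_2$, and permuting the other two vertices $v_3, v_4$), there are only a few combinatorial types for $e$: it could be the edge $\overline{v_3v_4}$ (disjoint from $\{v_1,v_2\}$), or an edge like $\overline{v_2v_3}$ sharing one endpoint with the subdivided edge, or in principle $\overline{v_1v_2}$ itself (but this last is geometrically forbidden since $\overline{v_1v_2}$ and the triangle $\langle v_1,v,v_2\rangle$ are coplanar). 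For each admissible piercing edge, I would then place the remaining vertex (or vertices) relative to the horizontal plane and the piercing edge, and in each such configuration decide whether the resulting linear embedding of $K_5$ on the remaining $10$ edges is paneled or contains a knot. Here the key leverage is that deleting the subdivided edge and straightening leaves a linear embedding of $K_4$ (or the subgraph obtained), and more usefully: any $K_4$ subgraph of a knotless embedding must be paneled by Lemma~\ref{K4Subgraph}/Lemma~\ref{3Sticks}/Lemma~\ref{Adjacent} together with the Robertson--Seymour--Thomas uniqueness theorem, which severely constrains how the pieces can fit together. In particular one can often argue that once the $K_4$ subgraphs are all forced into paneled position, the whole $K_5$ is totally free and hence paneled by the RST lemma, or else an explicit knotted cycle (needing at least $6$ sticks, so one of the longer cycles through $v$) appears.

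The main obstacle I anticipate is the case analysis for the remaining vertices: with $K_5$ there are three vertices not on the subdivided edge, so after fixing the piercing edge orthogonal to the triangle there are more free parameters (above/below the plane, in front of/behind the piercing edge, relative crossing orders along several edges) than in the $K_4$ case, and the number of subcases grows accordingly. The delicate part will be showing that in each "entangled" subcase a genuine knot appears rather than merely a non-paneled-but-knotless configuration — this requires identifying a specific cycle of at least six edges and verifying it is knotted, presumably a trefoil, as in Lemmas~\ref{3Sticks} and \ref{Adjacent}. I would organize the write-up by the combinatorial type of the piercing edge (a "disjoint" case analogous to Lemma~\ref{K4Subgraph} and an "adjacent" case analogous to Lemma~\ref{Adjacent}), and within each, isotope as much as possible before drawing the handful of genuinely distinct pictures, concluding each branch with either "paneled" or "contains a knot." Since paneled-or-knotted is exactly the dichotomy we need, establishing it for all branches finishes the proof.
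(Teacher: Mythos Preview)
Your high-level setup matches the paper's exactly: view the $11$-stick $K_5$ as a linear graph on six vertices with one degree-$2$ vertex $v$ adjacent to $v_1,v_2$, observe that $\langle v_1,v,v_2\rangle$ must be irreducible (else reduce to a linear $K_5$, paneled by Theorem~\ref{K5_paneled}), and analyze which edge pierces it. But from there you make the problem substantially harder than it is, and one of your case distinctions is simply wrong.

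First, an edge sharing an endpoint with the triangle $\langle v_1,v,v_2\rangle$ cannot pierce its interior in general position: a straight segment from a vertex of a triangle meets the triangle only at that vertex unless it lies in the plane of the triangle. So your ``adjacent'' case, with piercing edge of type $\overline{v_2v_3}$, does not occur. The only candidates are the three edges among $\{v_3,v_4,v_5\}$, and by the $S_3$ symmetry on those three vertices (not the $\mathbb{Z}/2$ on ``the other two vertices $v_3,v_4$'' that you wrote; there are \emph{three} remaining vertices) this reduces to a single case: $\overline{v_3v_4}$ pierces, orthogonally after an affine change.

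Second, once $\overline{v_3v_4}$ is fixed as the orthogonal piercer, $v_3$ and $v_4$ are pinned down and only $v_5$ remains free. The paper then uses the residual symmetries (swap $v_1\leftrightarrow v_2$, swap $v_3\leftrightarrow v_4$) to assume $v_5$ is above the plane of $\langle v_1,v,v_2\rangle$ and on the $v_1$-side of $\langle v,v_3,v_4\rangle$, and classifies its position by whether its projection lies inside the wedge between $\overrightarrow{v_3v_1}$ and $\overrightarrow{v_3v_2}$ and, if not, by which of the triangles $\langle v_1,v_3,v_4\rangle$ and $\langle v_1,v,v_2\rangle$ are pierced by $\overline{v_2v_5}$ and $\overline{v_4v_5}$ respectively. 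This yields exactly five pictures, four paneled and one containing a trefoil. The case explosion you anticipate from ``three vertices not on the subdivided edge'' never happens, because two of those three are endpoints of the piercing edge.

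Finally, the detour through the $K_4$ lemmas and Robertson--Seymour--Thomas is unnecessary here and it is not clear it would even go through: unlike $K_{3,3}$, deleting an edge of an $11$-stick $K_5$ does not hand you an $8$-stick $K_4$ of one of the three types covered by Lemmas~\ref{3Sticks}--\ref{Adjacent}. The paper's proof is a direct five-case geometric check, and that is both simpler and what you should aim for.
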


\begin{proof}Suppose that there exists a non-paneled knotless embedding of $K_5$ with $11$ sticks. Then the stick graph is a linear embedding of a graph with six vertices, one of which has degree $2$. Label the vertices $v,v_1,v_2,v_3,v_4$ and $v_5$, where $v$ is the vertex of degree $2$ and is adjacent to $v_1$ and $v_2$. The triangle $\langle v_1,v,v_2\rangle$ must be irreducible, since otherwise the embedding would be isotopic to an embedding with only $10$ sticks which is paneled by Theorem \ref{K5_paneled}. Thus one of the edges $\overline{v_3v_4}$, $\overline{v_4v_5}$, or $\overline{v_3v_5}$ must pass through the triangle $\langle v_1,v,v_2\rangle$.  Up to re-labeling and affine transformation, we may assume that the edge $\overline{v_3v_4}$ intersects the triangle $\langle v_{1},v,v_{2}\rangle$ orthogonally.  Up to symmetry, we may also assume that $v_5$ lies above the plane determined by the triangle $\langle v_1, v, v_2\rangle$ and on the same side of the plane determined by $\langle v,v_3, v_4\rangle$ as $v_1$ is.

We now determine the possible positions of $v_5$ by orthogonal projection onto the plane determined by $\langle v_1,v,v_2\rangle$.   The projection of the vertex $v_5$ can either lie inside the infinite wedge bounded by the rays $\overrightarrow{v_3v_1}$ and $\overrightarrow{v_3v_2}$ in the projection, or outside of this wedge.  Note that in the projection $v_3$ and $v_4$ are the same point. 

\begin{figure}[h]
\includegraphics[scale=.9]{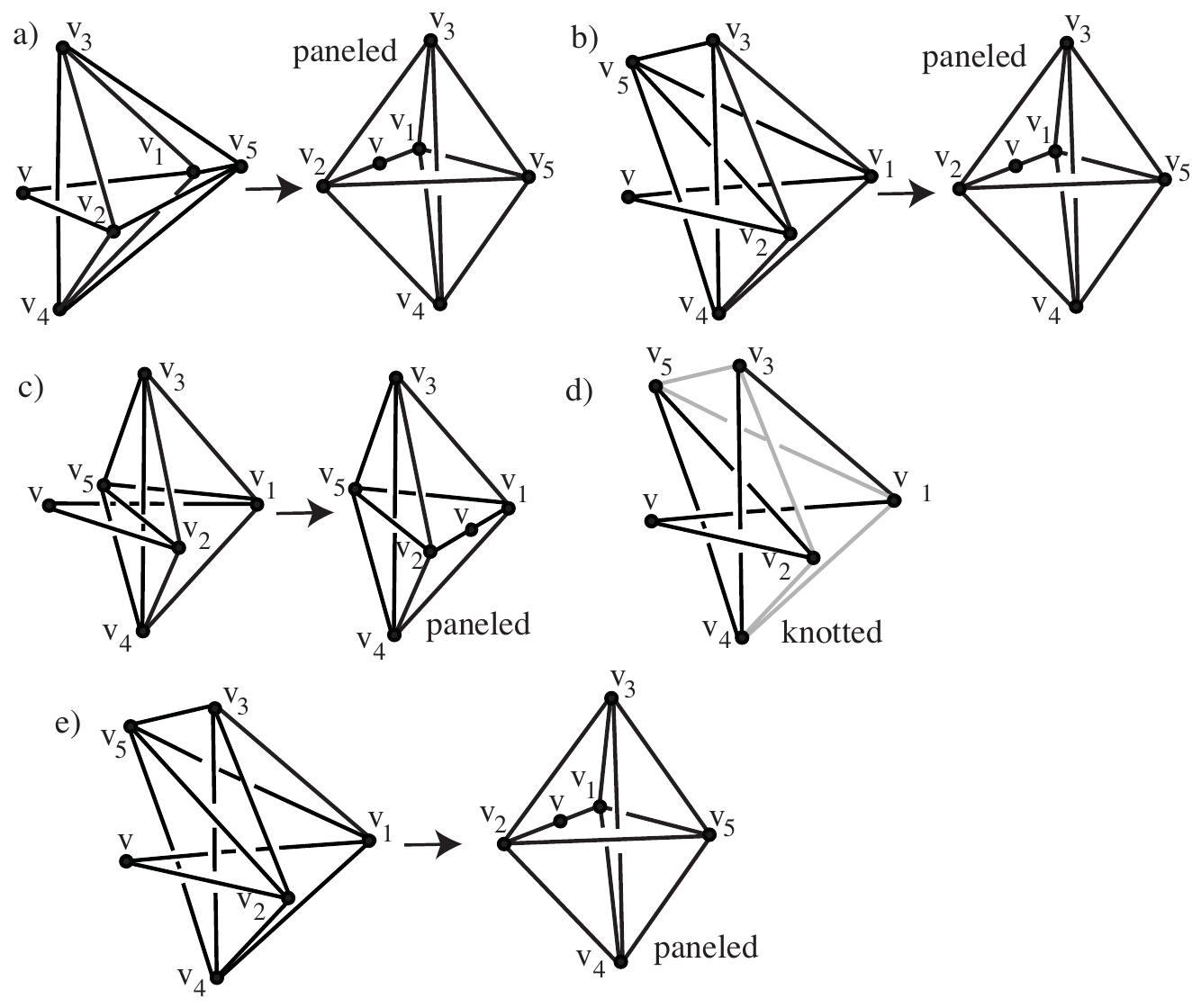}
\caption{Since the triangle $\overline{vv_{1}v_{2}}$ and the edge $\overline{v_3v_4}$ are orthogonal, there are only five positions for $v_5$ up to symmetry and affine transformation.}
\label{TriangleK5}
\end{figure}

  In the case that $v_5$ is inside the wedge, we obtain an embedding isotopic to Figure \ref{TriangleK5}(a), which is paneled.
For the case when $v_5$ is outside of the wedge, first recall that we assumed $v_5$ was on the same side of the plane determined by $\langle v,v_3, v_4\rangle$ as $v_1$.  The spatial configurations are now determined by whether $\overline{v_2v_5}$ pierces the triangle $\langle v_1,v_3,v_4\rangle$ and whether $\overline{v_4v_5}$ pierces the triangle $\langle v_1,v,v_2\rangle$. 

If $\overline{v_2v_5}$ pierces $\langle v_1,v_3,v_4\rangle$ and $\overline{v_4v_5}$ pierces $\langle v_1,v,v_2\rangle$, we obtain an embedding isotopic to Figure \ref{TriangleK5}(b), which is paneled. If $\overline{v_2v_5}$ does not pierce $\langle v_1,v_3,v_4\rangle$ and $\overline{v_4v_5}$ does pierce $\langle v,v_1,v_2\rangle$, the embedding is isotopic to Figure \ref{TriangleK5}(c), which is paneled.  If $\overline{v_2v_5}$ pierces $\langle v_1,v_3,v_4\rangle$ but $\overline{v_4v_5}$ does not pierce $\langle v,v_1,v_2\rangle$, we obtain an embedding isotopic to Figure \ref{TriangleK5}(d), which contains a trefoil knot. Finally, if neither $\overline{v_2v_5}$ pierces $\langle v_1,v_3,v_4\rangle$ nor $\overline{v_4v_5}$ pierces $\langle v,v_1,v_2\rangle$, then the embedding is isotopic to Figure \ref{TriangleK5}(e), which is paneled.\end{proof}

In Figure \ref{fig:13stick}, we illustrate a non-paneled knotless embedding of $K_5$ constructed with $13$ sticks. Observe that if we remove $v_5$ and its incident edges as well as the edge $\overline{v_1v_2}$, we obtain the $8$-stick Kinoshita's $\theta$-curve, illustrated on the right.  Since a Kinoshita's $\theta$-curve is non-paneled, this embedding of $K_5$ cannot be paneled.  

\begin{figure}[h]
\includegraphics[scale=.8]{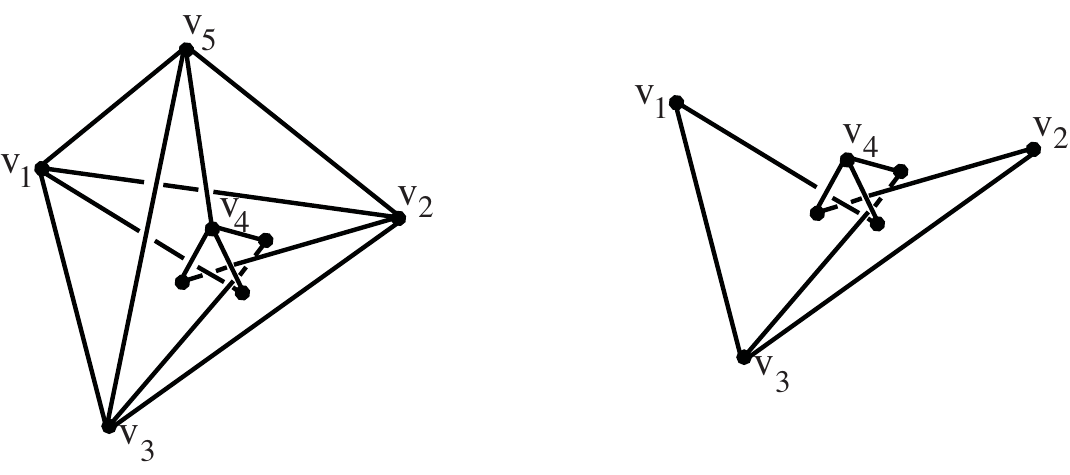}
\caption{A $13$-stick embedding of $K_5$ which contains the $8$-stick Kinoshita's $\theta$-curve on the right.}
\label{fig:13stick}
\end{figure}

In order to determine if our embedding contains a knot, we only need to consider loops with at least $6$ sticks.  Such a loop must contain vertex $v_4$.  Hence, up to symmetry and affine transformation, it has the form of one of the grey loops in Figure~\ref{fig:13stickLoops}, depending on whether or not it contains the edge $\overline{v_4v_5}$.  Since both of these loops are unknotted, this embedding is knotless.  This example together with Theorem~\ref{11stickK5} shows that the stick number of a non-paneled, knotless embedding of $K_5$ is either $12$ or $13$.

\begin{figure}[h]
\includegraphics[scale=.8]{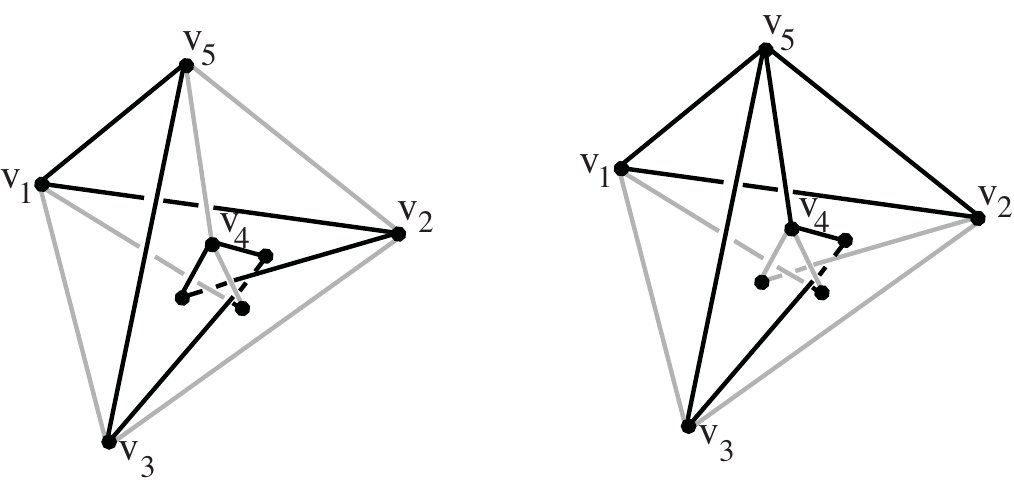}
\caption{Up to symmetry and affine transformation, the grey loops are the only loops in our $K_5$ with at least 6 sticks.}
\label{fig:13stickLoops}
\end{figure}

It remains an open question whether it is possible to have a paneled knotless embedding of $K_5$ with $12$ sticks. The analysis of embeddings of $K_5$ constructed with $12$ sticks has considerably more cases than the analysis of $K_4$ embeddings with $8$ sticks. For example, in each of the cases of Lemma \ref{3Sticks}, for $K_5$ we would have to consider all possible positions of an additional vertex $v_7$ connected to each of $v_1,v_2,v_3$, and $v_4$. Moreover, we must consider the cases where the edge piercing $\langle v_3, v_5, v_6 \rangle$ or $\langle v_4, v_6, v_5\rangle$ contains $v_7$. Each case in each of Lemmas \ref{3Sticks}--\ref{Adjacent} will have a corresponding increase in complexity. 

Alternatively, a matroid approach \cite{huh11,ramirez99} can be used to determine all $12$-stick embeddings of $K_5$ with the known census of affine point configurations \cite{finchi}.  Eliminating all reducible and knotted embeddings still results in a few thousand cases, only a small number of which can be easily isotoped to a reducible embedding. In order to extend this approach to systematically identify the isotopy types of embeddings of $K_5$ consisting of $12$ sticks, one would need a more general method for determining isotopies of an embedding of a non-complete graph that could change the combinatorial type of the affine point configuration.

\end{document}